\newtheorem{thm}{Theorem}[section]
\newtheorem{prop}[thm]{Proposition}
\newtheorem{lem}[thm]{Lemma}
\newtheorem{cor}[thm]{Corollary}
\newtheorem{fct}[thm]{Fact}
\newtheorem{rmk}[thm]{Remark}
\newtheorem{defn}[thm]{Definition}
\theoremstyle{definition}
\newtheorem{rmknit}[thm]{Remark}
\newcommand{\mdls}{\vDash}
\newcommand{\ovl}{\overline}
\newcommand{\ssetq}{\subseteq}
\newcommand{\mR}{\mathbb{R}}
\newcommand{\be}{\begin{equation}}
\newcommand{\ee}{\end{equation}}
\newcommand{\bse}{\begin{subequation}}
\newcommand{\ese}{\end{subequation}}
\providecommand{\Fin}{\mathop{\rm Fin}\nolimits}%
\providecommand{\mod}{\mathop{\rm mod}\nolimits}%
\providecommand{\min}{\mathop{\rm min}\nolimits}%
\providecommand{\sign}{\mathop{\rm sign}\nolimits}%
\renewcommand{\phi}{\varphi}
\title
 {One-basedness and reductions of elliptic curves over real closed fields}
\author{Davide Penazzi}
\begin{document}
 
\maketitle 

\begin{abstract}
Building on the positive solution of Pillay's conjecture we present a notion of ``intrinsic'' reduction for elliptic curves over a real closed field $K$. We compare such notion with the traditional algebro-geometric reduction and produce a classification of the group of $K$-points of an elliptic curve $E$ with three ``real'' roots according to the way $E$ reduces (algebro-geometrically) and the geometric complexity of the ``intrinsically'' reduced curve.

\end{abstract}

\section{Introduction}

Definability in this article is meant in first order logic. Those not familiar with logic can simply consider the class of \emph{definable sets} of a structure $M$ as a class of subsets of $M^n$, for all $n$, determined in a unique way after assigning a language $L_M$, and that is closed under finite union, finite intersection, complementation and projection.

A \emph{definable group} $(G,*)$ in $M$ is a group with a definable underlying set $G\ssetq M^n$ and whose operations $*:G\times G\rightarrow G$ and $^{-1}:G\rightarrow G$ have definable graphs. 

In model theory there exists a notion of an ``infinitesimal subgroup'' $G^{00}$ of a definable group $G$ in a saturated structure $M$.
The group $G^{00}$ is the smallest type-definable bounded-index subgroup of $G$. The motivating example of such group is when $G=([-1,1), +~\mathrm{mod}~2)$ in a real closed field; then $G^{00}$ turns out to be the subgroup of infinitesimal elements around $0$. 

For a large class of structures $G^{00}$ exists; in particular in o-minimal structures we obtain a functorial correspondance $\mathbb{L}:G\rightarrow G/G^{00}$, where $G/G^{00}$ is a real Lie group.
This correspondance is known to preserve many properties of the group and can be thought of a sort of ``model theoretic'' or ``intrinsic'' reduction of the group $G$.

An important question is whether $\mathbb{L}$ preserves the geometric complexity (in the sense of geometric stability theory) of the group $G$.

 The pioneering work of Zilber \cite{Zil} led to a classification of sets in a class of structures called Zariski Geometries: a definable set in a Zariski Geometry either ``resembles'' a pure set, or a vector space or an algebraically closed field.

For o-minimal theories, although they are not Zariski Geometries, a trichotomy classification has been given by Peterzil and Starchenko in \cite{PeSt97}, which roughly states that an o-minimal structure locally resembles either a pure set, or a vector space, or a real closed field.

In this article we work in the o-minimal context, and the concept of a structure having geometric complexity of a vector space is captured by the notion of having a $1$-based theory, following Pillay's work \cite{Pi06a}. An equivalent definition to $1$-basedness is for a structure to have the CF-property (Collapse of Families); this roughly states that given a uniformly definable family of functions, then the germ of such functions at any point can be defined using a single parameter. See \cite{LoPe} for details.

Instead of a theory, we shall analyse the geometric complexity of a definable set or of a type-definable set (i.e. obtained as an infinite, but smaller than the cardinality of the structure, intersection of definable sets) or of a type-definable quotient (i.e. the quotient of a definable set by a type-definable equivalence relation, called a \emph{hyperdefinable set}) induced by the ambient structure in which it lives. 
The method we use is to ``extract'' the induced theory of a definable set and then study $1$-basedness of such theory. 

When the ambient structure is a saturated real closed field $K$, all its definable sets will have the geometric complexity of real closed fields, in particular, also will any definable group $G$.
We can ask if the geometric complexity of hyperdefinable sets $K$ does not always behave so trivially. In particular a good candidate for nonstandard behaviour is the group $G/G^{00}$, where $G$ is definable in a saturated real closed field $K$. We then work in a suitable espansion of $K$ in which $G/G^{00}$ is definable and ``extract'' its theory. 
The general aim of our project is to give a dichotomy classification, \`a la Peterzil-Starchenko of the groups $G/G^{00}$ where $G$ is a $1$-dimensional definable, definably connected, definably compact group in a saturated real closed field $K$. 
Such a project has been initiated in the author's thesis \cite{Penth} and in \cite{Pen1} for some specific groups $G$.

We present in this article an analysis when $G$ is the connected component of an elliptic curve with three ``real'' (meaning in $K$ rather than its algebraic closure) roots.
A first observation is that for elliptic curves $E(M)$ over a valued field $(M,w)$ with discrete valuation group there is an algebro-geometric notion of reduction to a (possibly singular) curve $\tilde{E}(k_w)$ defined over the residue field. There seems not to be such a notion for real closed valued fields, so we need to adapt the algebro-geometric reduction to the context of real closed valued fields (that, we recall, have $\mathbb{R}$ as residue field).

It is natural then to ask if the bad behaviour when an elliptic curve $E(K_w)$ reduces to a singular curve is connected to a loss of structural complexity from the group $E(K_w)^0$ to $E(K_w)^0/E(K_w)^{00}$. This would shed light on what is the relation between the ``intrinsic'' reduction $E(K_w)^0\rightarrow E(K_w)^0/E(K_w)^{00}$ and the ``algebro-geometric'' reduction $E(K_w)\rightarrow \tilde{E}(\mathbb{R})$, and if we can determine model theoretical properties using valuation theoretic notions.  

\vskip 0.3 cm

In the rest of this section we shall describe the setting we work in and the main results obtained in \cite{Pen1}. An outline of the proof of the main theorem is given.

In Section 2 we introduce elliptic curves, the notion of minimal form for an elliptic curve and the definition of algebro-geometric reduction.

In Section 3 we proceed with the study of $1$-basedness when $G=E(K)^0$ where $E$ is an elliptic curve with three ``real'' roots.

In Section 4 we extend the results obtained to truncations of the groups studied in Section 3.

\vskip 0.3 cm

\subsection{Setting and basic facts}

For the rest of the paper $K$ denotes a saturated real closed field, whilst $M$ denotes a saturated o-minimal structure. (Saturated means big enough to find realizations for all consistent types with $<|M|$ parameters in the structure itself.) This allows us to state the results used in full generality.

A definable group $G$ is \emph{definably connected} if there are no proper definable subgroups of finite index, and $G$ is \emph{definably compact} if any definable function from an open interval of the base structure to $G$ has its limit in $G$. The following theorem has been completely proved in \cite{HrPePi06} but is still known as Pillay's conjecture.

\begin{thm}[Pillay's conjecture]

 Given $G$ a definably connected definable group in a saturated o-minimal structure $M$, we have that 
\begin{enumerate}
 \item $G$ has a smallest type-definable subgroup of bounded index $G^{00}$.
\item $G/G^{00}$ is a compact connected Lie group, when equipped with the logic topology.
\item If, moreover, $G$ is definably compact, then the dimension of $G/G^{00}$ (as a Lie group) is equal to the $o$-minimal dimension of $G$.
\item If $G$ is commutative then $G^{00}$ is divisible and torsion-free.
\end{enumerate}
 
\end{thm}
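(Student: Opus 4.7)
The plan is to establish the four parts in order, with part (3) forming the technical heart. For (1), o-minimal theories are NIP, and a standard compactness argument in the NIP setting shows that the intersection of all type-definable bounded-index subgroups of a definable group is itself type-definable of bounded index; this yields $\Goo$. For (2), equip $G/\Goo$ with the logic topology (a set is closed iff its preimage in $G$ is type-definable over a small parameter set); general arguments about quotients by bounded type-definable equivalence relations in saturated structures make $G/\Goo$ a compact Hausdorff topological group, and definable connectedness of $G$ passes to connectedness of the quotient. To promote this to a Lie group structure I would invoke the Gleason--Yamabe theorem (the solution to Hilbert's fifth problem), which reduces matters to showing that $G/\Goo$ has no small subgroups. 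Here o-minimality enters essentially: cell decomposition and definable curve selection produce, for each nontrivial subgroup $H$ of $G/\Goo$, a definable neighbourhood of the identity in $G$ whose projection avoids $H$.

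Part (3) is where I expect the main obstacle. The guiding idea is to construct a finitely additive, translation-invariant ``definable Haar measure'' on the Boolean algebra of definable subsets of $G$; definable compactness is indispensable to obtain a well-defined $[0,1]$-valued measure in the first place. One then verifies that this descends to the genuine Haar measure on the Lie quotient. The dimension equality $\dim(G/\Goo)=\dim G$ should follow by transferring between the cell-decomposition notion of o-minimal dimension on $G$ and the measure-theoretic Lie dimension on $G/\Goo$: a generic type of $G$ over a small set projects onto all of $G/\Goo$, while its fibres are ``infinitesimal'' in a way that exactly matches the o-minimal dimension count.

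Finally for (4), assume $G$ commutative. Torsion-freeness follows by a counting argument based on (3): in a definably compact commutative group of o-minimal dimension $d$ the $n$-torsion $G[n]$ has exactly $n^d$ elements, and by (3) the compact connected commutative Lie group $G/\Goo$ also has dimension $d$ and hence $n^d$ elements of $n$-torsion. The projection $G\to G/\Goo$ must therefore restrict to a bijection on $n$-torsion, forcing $\Goo[n]=\{e\}$ for every $n$. Divisibility then follows from minimality: the subset $[n](\Goo)$ is type-definable of bounded index (as the image of a type-definable bounded-index set under the finite-to-one definable homomorphism $[n]$) and is contained in $\Goo$, so minimality of $\Goo$ forces $[n](\Goo)=\Goo$.
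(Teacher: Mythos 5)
The paper does not prove this theorem: it is quoted as a known result with a citation to Hrushovski, Peterzil and Pillay (\cite{HrPePi06}) and then used as a black box, so there is no ``paper's own proof'' to compare your attempt against. What you have sketched is the proof in the literature, which in fact spans several papers (Pillay \cite{Pi04d}; Berarducci, Otero, Peterzil and Pillay; Edmundo and Otero; and \cite{HrPePi06}). Your high-level ingredients for (1)--(3) --- NIP plus a compactness argument for the existence of $\Goo$, the logic topology together with a no-small-subgroups verification and Gleason--Yamabe for the Lie structure, and an invariant Keisler measure built from definable compactness for the dimension equality --- are indeed the right ones, although each is stated at a level of vagueness that leaves essentially all the real work (establishing NSS, and showing that the measure controls o-minimal dimension) undone.

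Part (4) contains a concrete gap. From $|G[n]|=n^{d}=|(G/\Goo)[n]|$ you conclude that the projection ``must restrict to a bijection on $n$-torsion''; but a homomorphism between finite groups of equal order need not be a bijection unless one already has injectivity or surjectivity, and injectivity is exactly the torsion-freeness of $\Goo$ that you are trying to prove. The order of your two deductions should be reversed: first prove divisibility of $\Goo$ via minimality --- and note that for $[n]\Goo$ to have bounded index you need $[n]\colon G\to G$ to be \emph{surjective} (i.e.\ that $G$ itself is divisible, a theorem of Strzeb\'onski), since ``finite-to-one'' alone does not give this without an extra appeal to o-minimal dimension theory and definable connectedness --- and then use divisibility of $\Goo$ to show that $G[n]\to(G/\Goo)[n]$ is surjective: given $g$ with $ng\in\Goo$, choose $g'\in\Goo$ with $ng'=ng$ and set $h=g-g'\in G[n]$. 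Only then does equal cardinality force injectivity, giving $\Goo[n]=\{e\}$. You should also flag that the count $|G[n]|=n^{\dim G}$ is itself a deep theorem of Edmundo and Otero rather than something to be taken for granted.
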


We thus obtain a functor from the category of definable, definably connected, definably compact groups to the category of compact Lie groups: $\mathbb{L}:G\rightarrow G/G^{00}$.

\vskip 0.3 cm

We recall a few facts about o-minimality, in particular the notion of dimension of a definable set in an o-minimal structure; we refer the reader to the book of van den Dries \cite{VdDbook} for an extensive introduction.

Given a structure $M$ and $X\ssetq M^n$ a definable, definably linearly ordered or circularly ordered set, we say that $X$ is \emph{o-minimal}  (resp. \emph{weakly-o-minimal}) if any definable (with parameters from $M$) subset $S\ssetq X$ is a finite union of intervals and points (resp. convex sets).  
We recall that a circularly ordered set is a set equipped with a ternary relation $R(a,b,c)$ meaning that $c$ is after $b$ which is after $a$ clockwise. We then define an \emph{open interval} to be $(a,c)=\{b:R(a,b,c)\}$, and closed intervals and convex sets in the obvious way. For linearly ordered sets we consider as intervals also $(-\infty,a)$ and $(a,\infty)$.

Observe that in the definition of o-minimal sets above when $X=M$ we obtain the usual notion of an \emph{o-minimal structure}.

Basic examples of o-minimal structures are pure linearly ordered dense set without endpoints, such as $(\mathbb{Q},<)$, ordered vector spaces over a field and real closed fields. Real closed fields with a predicate for a convex set, and real closed valued fields are weakly-o-minimal structures.

A well known fact proved by Knight, Pillay and Steinhorn in \cite{KnPiSt} states that if a structure $M$ with language $L_M$ is o-minimal, all structures satisfying the same first order $L_M$-sentences (i.e. all $N$ such that $N\mdls Th(M)$, the theory of $M$) are o-minimal. We can thus say that a theory $T$ is o-minimal if any/all of its models $M\mdls T$ are o-minimal. This is not generally true for o-minimal sets.
 
\vskip 0.3 cm

O-minimal structures carry a notion of dimension:

\begin{defn}
Given a definable set $X$, \[dim(X)=max\{i_1+\dots+i_m|~X ~contains~ an ~(i_1,\cdots,i_m)-cell\}.\]
\end{defn}

An $(i_1,\cdots,i_m)-cell$ is defined inductively by:
\begin{enumerate}
 \item A $(0)-cell$ is a point $x\in M$, a $(1)-cell$ is an interval $(a,b)\in M$.
\item Suppose $(i_1,\dots,i_m)-cells$ are already defined; then an  $(i_1,\dots,i_m,0)-cell$ is the graph of a definable continuous function $f:Y\rightarrow M$, where $Y$ is an $(i_1,\dots,i_m)$-cell; further an $(i_1,\dots,i_m,1)-cell$ is a set of points $\{(\ovl{x},y)| \ovl{x}\in Y, f(\ovl{x})< y <g(\ovl{x})\}$, where $f,g$ are definable continuous functions  $f,g:Y\rightarrow M$, $f<g$ and $Y$ is a $(i_1,\dots,i_m)-cell$.
\end{enumerate}

We say that a definable group $G$ is $n$-dimensional if its underlying set is $n$-dimensional.

\vskip 0.4 cm

Given an o-minimal theory $T$, and a model $M$, with $f(x,\ovl{y})$ a $\emptyset$-definable function in $M$, and $a\in M$, we define an equivalence relation $\sim_a$ on tuples of the same length as $\ovl{y}$ by $\ovl{c}\sim_a\ovl{c}'$ if neither of $f(-,\ovl{c})$, $f(-,\ovl{c}')$ is defined in an open neighbourhood of $a$ or if there is an open neighbourhood $U$ of $a$ such that $f(-,\ovl{c})=f(-,\ovl{c}')$ in $U$.  We call the equivalence class of $\ovl{c}$ the \textit{germ of $f(-,\ovl{c})$ at $a$}, and denote it by $\ovl{c}/{\sim_a}$. 

We say that $T$ is \textit{$1$-based} if in any saturated model $M\mdls T$, for any $a\in M$, for all definable functions $f(x,\ovl{y}):M \times M^n\rightarrow M$, and for any $\ovl{c}\in M^n$ such that $a\notin dcl(\ovl{c})$, we have $\ovl{c}/{\sim_a} \in dcl(a,f(a,\ovl{c}))$ as an imaginary element, i.e., in the appropriate sort of $M^{eq}$: the expansion of $M$ by predicates for all definable quotients.

The basic example of a $1$-based o-minimal theory is the theory of an ordered vector space over a field ($Th(\mathbb{Q},+,0,<)$); an example of non-$1$-based theory is the theory of real closed fields ($Th(\mathbb{R},+,-,\cdot,0,1,<)$).

\vskip 0.4 cm

We define now the structural complexity of a definable set.
Given a definable (infinite) set $S$ in $M$ we can ``extract'' its theory (with all the induced structure from $K$): consider the structure $\mathcal{S}$ whose underlying set is $S$ and work in a language $L_{\mathcal{S}}$ where there is a predicate for every definable (in $M$ and with parameters in $M$) subset of $S^n$ for all $n$. We call the theory $T_S=Th(\mathcal{S})$ the \emph{theory of} $S$ \emph{induced by} $M$. Such a theory is generally hard to study and analyse, since the language will have $|M|$ predicates. 

We obtain a more tame theory for stably embedded sets: a set $S$ is \emph{stably embedded} in $M$ if every definable subset of $S^n$ with parameters in $M$ is definable with parameters from $S$. This implies that $L_{\mathcal{S}}$ need only have predicates for every $\emptyset$-definable subset of $S^n$.

We say that a definable set $S$ is \emph{$1$-based (in $M$)} if the theory $T_S$ is $1$-based.

\vskip 0.2 cm
A basic but fundamental lemma is the following:
\begin{lem}\label{coroneb}
 Given a saturated structure $M$ expanding a field, o-minimal definable sets $X,Y$ definably linearly ordered or circularly ordered, and a definable bijection $\phi:X\rightarrow Y$, then $X$ is (non-) $1$-based if and only if $Y$ is (non-) $1$-based.
\end{lem}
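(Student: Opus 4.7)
The plan is to observe that the definable bijection $\varphi$ turns the two ``extracted'' structures $\mathcal{X}$ and $\mathcal{Y}$ into isomorphic $L$-structures, from which $1$-basedness transfers automatically as a property of the induced theory.

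For each $n \geq 1$, the bijection $\varphi^n \colon X^n \to Y^n$ is $M$-definable (together with its inverse, since $\varphi$ is a definable bijection), and hence sets up a one-to-one correspondence $A \leftrightarrow \varphi^n(A)$ between $M$-definable subsets of $X^n$ and $M$-definable subsets of $Y^n$. The predicates of $L_\mathcal{X}$ and $L_\mathcal{Y}$, which by construction are exactly such subsets, are thereby put in bijection, and under this labeling $\varphi$ becomes an $L$-isomorphism $\mathcal{X} \cong \mathcal{Y}$; in particular $T_X$ and $T_Y$ coincide up to a relabeling of predicates.

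Unpacking the definition of $1$-basedness, given any $L_\mathcal{Y}$-definable function $g(x,\bar{y}) \colon Y \times Y^n \to Y$ and parameters $a \in Y$, $\bar{c} \in Y^n$ with $a \notin \mathrm{dcl}(\bar{c})$ in $\mathcal{Y}$, one transfers via $\varphi^{-1}$ to an $L_\mathcal{X}$-definable function $f(x,\bar{y}) := \varphi^{-1}(g(\varphi(x), \varphi^n(\bar{y})))$ and parameters $a' := \varphi^{-1}(a)$, $\bar{c}' := (\varphi^{-1})^n(\bar{c})$, still satisfying $a' \notin \mathrm{dcl}(\bar{c}')$ by the isomorphism. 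If $T_X$ is $1$-based then $\bar{c}'/{\sim_{a'}} \in \mathrm{dcl}(a', f(a', \bar{c}'))$ in $\mathcal{X}^{eq}$, and pushing this assertion forward along the isomorphism $\varphi$ yields $\bar{c}/{\sim_a} \in \mathrm{dcl}(a, g(a, \bar{c}))$ in $\mathcal{Y}^{eq}$, as required. The converse is symmetric.

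The main subtlety to check is that the germ relation $\sim_a$ is defined through open neighbourhoods in the order (or circular-order) topology, and $\varphi$ is not assumed to be order-preserving, so it need not be a homeomorphism. This is handled by noting that both orders are $M$-definable by hypothesis, so each lies in the respective induced language, making each $\sim_a$ an $L$-definable equivalence relation that is automatically transported by the $L$-isomorphism $\varphi$ to its counterpart on the other side. The hypothesis that $M$ expands a field is used only to guarantee that the imaginary elements $\bar{c}/{\sim_a}$ appearing in the definition behave in the usual way inside $M^{eq}$.
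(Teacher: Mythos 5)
Your approach---realizing $\mathcal{X}$ and $\mathcal{Y}$ as isomorphic $L$-structures under the $\phi$-induced identification of languages, and then transferring $1$-basedness as a theory-level invariant---is genuinely different from the paper's proof, which works with a single non-$1$-basedness witness $f(x,\ovl{y})$ on an interval and uses piecewise monotonicity of $\phi$ (a basic consequence of o-minimality) to compose it into a witness on the other side. Your route is more structural, and you correctly identify the one nontrivial point: $\phi$ need not be a homeomorphism. But the sentence you offer to settle that point is not a proof.

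Under your identification of languages ($A\leftrightarrow\phi^n(A)$), the predicate $<_X$ in $L_{\mathcal{X}}$ corresponds to the pushed-forward relation $\phi_*(<_X)$ in $L_{\mathcal{Y}}$, which is in general a different definable binary relation from $<_Y$. The isomorphism therefore carries the germ relation $\sim_{a'}$ on $\mathcal{X}$ (defined via $<_X$-neighbourhoods) to the germ relation at $a=\phi(a')$ computed with respect to $\phi_*(<_X)$-neighbourhoods, \emph{not} to $\sim_a$ computed with respect to $<_Y$-neighbourhoods. So from $1$-basedness of $T_X$ you obtain $\ovl{c}/{\sim_a^{\phi_*(<_X)}}\in\mathrm{dcl}(a,g(a,\ovl{c}))$, whereas what is needed is the same assertion for $\sim_a^{<_Y}$. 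Saying the germ relation is ``automatically transported to its counterpart'' is precisely the claim that these two germ relations agree, which is the thing to be proved, not a remark. The missing ingredient is exactly what the paper makes explicit: by o-minimality $\phi$ is piecewise monotonic, hence a local order-homeomorphism off a finite definable exceptional set; and since the relevant $a$ satisfy $a\notin\mathrm{dcl}(\ovl{c})$, they avoid that set, so the two neighbourhood filters at $a$ (hence the two germ relations) coincide. Once you insert that observation, your argument closes, and arguably gives a tidier proof than the paper's.
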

\begin{proof}
 Suppose $X$ is non-$1$-based. The bijection $\phi$ is piecewise order-preserving or order-reversing (otherwise we would be able to define a set that is not a finite union of intervals and points). Suppose $f(x,\ovl{y})$ is a function witnessing non-$1$-basedness in an interval $I$ of $X$. Without loss of generality we can suppose $\phi$ is order-preserving in $I$, then $\phi\cdot f$ witnesses non-$1$-basedness of $Y$.  The other direction is analogous.
\end{proof}
\vskip 0.3 cm

In a real closed field it is well-known that any definable infinite set is non-$1$-based, therefore every definable group $G$ will have the same geometric complexity of a field. We sketch a proof below; the proof uses some results of o-minimality that, although basic, are not recalled in this article. We suggest the book of van den Dries \cite{VdDbook} to the interested reader. 

\begin{fct}\label{defrcf}
 Given a real closed field $K$, any definable infinite set $S\ssetq K^n$ is non-$1$-based.  
\end{fct}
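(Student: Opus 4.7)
My plan is to reduce the claim for an arbitrary infinite definable $S \ssetq K^n$ to non-$1$-basedness of the field $K$ itself, using o-minimal cell decomposition and Lemma \ref{coroneb}.

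First, to see that $K$ (as a definable subset of itself) is non-$1$-based, I would exhibit a witness via the two-parameter definable affine family $f(x, y_1, y_2) = y_1 x + y_2$. Picking $(c_1, c_2)$ generic and $a$ generic and independent from $(c_1, c_2)$, the germ of $x \mapsto c_1 x + c_2$ at $a$ recovers the pair $(c_1, c_2)$: the slope $c_1$ is readable from the function on any open neighbourhood, and then $c_2 = f(a, c_1, c_2) - c_1 a$. On the other hand, from the single value $f(a, c_1, c_2) = c_1 a + c_2$ together with $a$ one cannot recover $(c_1, c_2)$, since fixing $a$ and $v = c_1 a + c_2$ leaves a $1$-parameter family of solutions $(c_1, v - c_1 a)$. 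Thus $(c_1, c_2)/{\sim_a} \notin dcl(a, f(a, c_1, c_2))$, giving non-$1$-basedness.

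Next, given $S \ssetq K^n$ infinite and definable, o-minimal cell decomposition provides a cell $C \ssetq S$ of dimension $\geq 1$; iterating the cell decomposition on $C$ extracts a $1$-dimensional cell $C' \ssetq C$ which is by construction definably homeomorphic to an open interval $I \ssetq K$. Since every open interval of $K$ is in definable order-preserving bijection with $K$ (for instance via an order-preserving rational map on $(a,b)$), composing yields a definable order-preserving bijection $\phi : K \to C'$. Applying Lemma \ref{coroneb} to $\phi$ transfers non-$1$-basedness from $K$ to $C'$.

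Finally, I would transfer non-$1$-basedness from the subset $C'$ to the ambient $S$. Since $C'$ is $M$-definable, it is $\emptyset$-definable in $T_S$. A witness $(g, a, \ovl{c})$ for non-$1$-basedness on $C'$ extends $M$-definably to $\tilde{g} : S \times S^n \to S$ by prescribing a fixed value off $C'$; intersecting with the $\emptyset$-definable predicate for $C'$ shows that $dcl^{T_S}$ and $dcl^{T_{C'}}$ agree on tuples drawn from $C'$, so the witnessing inequality of imaginaries persists in $T_S$. I expect this last step to be the delicate point, since a priori the induced theories $T_{C'}$ and $T_S$ are different and one must verify that germs and definable-closure conditions computed in the larger induced theory still refute $1$-basedness; modulo this careful bookkeeping, the argument is a clean two-step reduction.
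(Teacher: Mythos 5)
Your proposal follows essentially the paper's route: o-minimal cell decomposition to extract a one-dimensional piece, the affine family $f(x,y_1,y_2)=y_1x+y_2$ as the witness for non-$1$-basedness, and Lemma~\ref{coroneb} to transfer. The paper makes your informal ``one-parameter family of solutions'' heuristic rigorous with a dcl-dimension count (choosing $0<a<b<c<1$ algebraically independent with $d=ab+c$, so $\dim(a,b,c,d)=3$, whereas $(b,c)\in dcl(a,d)$ would force $\dim\leq 2$), and it reduces to an interval via a coordinate projection of $S$ rather than by embedding a $1$-dimensional cell $C'\ssetq S$; otherwise the argument, including the glossed-over final transfer step that you rightly flag as delicate and that the paper's sketch also leaves implicit, is the same.
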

\begin{proof}[Sketch proof:]
 By cell decomposition of $K$ there is a projection $\pi$ on some coordinate of $K^n$ such that $\pi(S)$ contains an interval $I$. Any interval $I\ssetq K$ is in definable bijection with the interval $[0,1)$, and is stably embedded. It suffices, by Lemma \ref{coroneb}, to witness non-$1$-basedness in $[0,1)$. Let $0<a<b<c<1$ be algebraically independent elements such that $a\cdot b +c=d$ is still an element of $[0,1)$. Thus it has $dim(a,b,c,d)=3$ (here it is dcl-dimension). Since $(b,c)/{\sim_a}$ is simply $(b,c)$, if $[0,1)$ were $1$-based, $(b,c)\in dcl(a,d)$ and thus $dim(a,b,c,d)=2$, contradicting $dim(a,b,c,d)=3$. Therefore $[0,1)$ is non-$1$-based, and so is $S$.
\end{proof}

\vskip 0.3 cm

We recall some basics of valuation theory, mantaining the notation of \cite{Pen1}. We denote a \emph{real closed valued field} by $K_w=\big(K,\Gamma_w ,w\big)$, where $K$ is a saturated real closed field with its language, $\Gamma_w$ a divisible abelian ordered group, called the \emph{value group}, with its language, and $w$ a \emph{valuation}, i.e., a surjective map $w:K\rightarrow (\Gamma_w\cup {\infty})$ satisfying the following axioms: for all $x,y\in K$
\begin{enumerate}
 \item $w(x)=\infty \iff x=0$,
\item $w(xy)=w(x)+w(y)$,  
\item $w(x-y)\geq min\{w(x),w(y)\}.$
\end{enumerate}

We denote the \emph{valuation ring} (i.e. the ring $\{x\in K| w(x)\geq 0\}$) by $R_w$, its unique maximal ideal $\{x\in K| w(x)> 0\}$ (the \emph{valuation ideal}) by $I_w$, $k_w=R_w/I_w$ the \emph{residue field}; we recall moreover that the value group $\Gamma_w$ is $K^*/(R_w\setminus I_w)$.

When the valuation ring is $\Fin$: the convex hull of $\mathbb{Q}$ in $K$, we call the valuation the \emph{standard valuation} and denote it by $v$; the corresponding real closed valued field is $M_v$. The valuation ideal is $\mu$, the infinitesimal neighbourhood of $0$. The standard residue field, $k_v$, is $\mathbb{R}$, and the projection $\Fin\rightarrow \mathbb{R}$ is called the \emph{standard part map}.

\vskip 0.2 cm

We can obtain a real closed valued field from a real closed field via a particular kind of Dedekind cut, called a valuational cut: a \emph{valuational cut} in a structure $(M,+,0,<,\dots)$ expanding an ordered group is a cut $\alpha$ such that there exists $\epsilon\in M$, $\epsilon>0$, for which $\alpha+\epsilon=\alpha$. By Theorem 6.3 of \cite{MMS}, if $M$ is a weakly o-minimal expansion of an ordered field with a definable valuational cut, then $M$ has a nontrivial definable convex valuation.

\vskip 0.2 cm

We define the \emph{open balls} $B_{>\gamma}(a)=\{x\in K|w(x-a)>\gamma\}$ and \emph{closed balls} $B_{\geq\gamma}(a)=\{x\in M|w(x-a)\geq\gamma\}$, where $\gamma\in \Gamma_w$ and $a\in K$. A simple remark is:

\begin{rmk}\label{skelk}
There is a definable field isomorphism $B_{\geq\gamma}(0)/B_{>\gamma}(0)\cong k_w$ for any $\gamma\in \Gamma_w$
\end{rmk}
Clearly the map $f:B_{\geq\gamma}(0)\rightarrow B_{\geq0_{\Gamma_w}}(0)$, sending $x\mapsto \frac{x}{u}$, where $u\in K $ such that $w(u)=\gamma$, is well defined in the quotients $B_{\geq\gamma}(0)/B_{>\gamma}(0)\rightarrow B_{\geq 0_{\Gamma_w}}(0)/B_{> 0_{\Gamma_w}}(0)=k_w$ and is a field isomorphism.

\begin{rmknit}\label{mellrem}

In \cite{Me06}, Mellor proved that every definable subset of $\Gamma_w^n$ (resp. $k_w^n$) definable with parameters from $M_w$ in its valued field language is definable with parameters from $\Gamma_w$ (resp. $k_w$) in its ordered group (resp. ordered field) language. This implies the following fact
 
\end{rmknit}

\begin{fct}\label{onebgammak}
 $Th(\Gamma_w)=Th(\mathbb{Q},+,0,<)$, and therefore $\Gamma_w$ is $1$-based in $M_w$. Analogously $Th(k_w)=Th(\mathbb{R},+,\cdot,0,1,<)$, and therefore $k_v$ is non-$1$-based in $M_w$.
\end{fct}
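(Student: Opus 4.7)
The plan is to deduce both halves of the fact directly from Mellor's theorem (stated in Remark \ref{mellrem}) together with completeness of the relevant natural theories and the two $1$-basedness observations already recorded in the introduction. The role of Mellor's theorem is to guarantee that the sorts $\Gamma_w$ and $k_w$ are \emph{stably embedded} in $M_w$ and pick up no extra induced structure beyond their native ordered-group, respectively ordered-field, structure; once that is granted, the induced theory $T_{\Gamma_w}$ (resp.\ $T_{k_w}$) coincides with the ``pure'' theory of the sort, and the conclusions about $1$-basedness become straightforward citations.

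For $\Gamma_w$, I would first note that because $K$ is real closed, every $x\in K^*$ has an $n$-th root (up to sign), and since $w(-x)=w(x)$ this forces $w(x)\in n\Gamma_w$ for all $n\geq 1$; hence $(\Gamma_w,+,0,<)$ is a nontrivial divisible ordered abelian group. The theory of such groups is classically complete and equals $Th(\mathbb{Q},+,0,<)$. Mellor's theorem then says that every $M_w$-definable subset of $\Gamma_w^n$ is already definable in the ordered group language with parameters from $\Gamma_w$, so the induced theory $T_{\Gamma_w}$ is precisely $Th(\mathbb{Q},+,0,<)$. This theory was already identified in the excerpt as the basic example of a $1$-based o-minimal theory, so $\Gamma_w$ is $1$-based in $M_w$.

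For $k_w$, the residue field of a real closed valued field is itself real closed, so its complete theory in the ordered field language is $Th(\mathbb{R},+,\cdot,0,1,<)$. Mellor's theorem again ensures that every $M_w$-definable subset of $k_w^n$ is definable over $k_w$ in the ordered field language, giving $T_{k_w}=Th(\mathbb{R},+,\cdot,0,1,<)$. Specialising to the standard valuation $v$, where $k_v=\mathbb{R}$, Fact \ref{defrcf} applied to the induced real closed field structure on $k_v$ yields that $k_v$ is non-$1$-based in $M_v$.

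The only non-bookkeeping step is the appeal to Mellor's theorem, which does all the real work by killing any hidden induced structure on the two sorts. After that, both halves of the fact reduce to standard completeness of divisible ordered abelian groups and of real closed fields, combined with the two $1$-basedness inputs already cited; no genuine obstacle remains.
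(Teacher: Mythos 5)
Your proof is correct and matches the paper's intent exactly: the paper presents this Fact without an explicit proof, as an immediate consequence of Mellor's theorem (Remark \ref{mellrem}), and your write-up simply fills in the implicit steps — divisibility of $\Gamma_w$ from real closedness of $K$, real closedness of $k_w$, completeness of the two ambient theories, and the two cited $1$-basedness inputs. Nothing is missing and no alternative route is taken.
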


Given a group equipped with a linear order $G=(G,*,<)$, a \emph{truncation} of $G$ by an element $a$ is the group $\left(\left[a^{-1},a\right),*\mod a^2\right)$, where the operation \\$*_{\mod a^2}$ is defined as follows: 
\[b *_{\mod a^2} c= \left\{\begin{array}{ll}
 b * c &~if~  a^{-1}<b * c <a \\
b * c * a^{-1} & ~if ~b * c >a \\
b * c * a & ~if ~b * c <a^{-1} ~~~.\\
\end{array}\right. 
\]

The linear order $<$ of $G$ naturally induces a circular ordering on the truncation.

\vskip 0.3 cm

In \cite{Pen1} the following theorem is proved:

\begin{thm}
Given a definable, definably compact, definably connected, one-dimensional (in the o-minimal sense) group $G$ in a saturated real closed field $K$, if $G$ is an additive truncation, a small multiplicative truncation, i.e., $G=\left(\left[b^{-1},b\right),*~mod~b^2\right)$, with $v(b)=0$, or a truncation of $SO_2(K)$, $G/G^{00}$ is non-$1$-based in the expansion of $K$ by a predicate for $G^{00}$.

If $G$ is a big multiplicative truncation, i.e., $G=\left(\left[b^{-1},b\right),*~mod~b^2\right)$, with $v(b)<0$, the group $G/G^{00}$ is $1$-based in the expansion of $K$ by a predicate for $G^{00}$.
\end{thm}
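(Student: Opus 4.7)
The plan is to treat the four cases separately but within a common scheme: in each case identify the subgroup $G^{00}$ explicitly, produce a definable bijection (in the expansion $(K,G^{00})$) between $G/G^{00}$, or an interval thereof, and a structure whose $1$-basedness status is already known, and conclude using Lemma \ref{coroneb} together with Facts \ref{defrcf} and \ref{onebgammak}. The crux is that in the first three cases the relevant auxiliary structure will be the residue field $k_v\cong\mathbb{R}$ (non-$1$-based), while in the fourth it will be the value group $\Gamma_v$ ($1$-based).

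For the additive truncation $G=([-a,a),+\mod 2a)$, I would first observe that $G^{00}$ equals the set of elements of $G$ infinitesimally close to $0$. The cut in $K$ defined by $G^{00}$ is valuational, so by Theorem 6.3 of \cite{MMS} the expansion $(K,G^{00})$ carries a nontrivial definable convex valuation $v$, and hence interprets $k_v\cong\mathbb{R}$. On a small neighbourhood of the identity, the quotient map $G\to G/G^{00}$ becomes a definable bijection onto an interval of $k_v$, so Lemma \ref{coroneb} together with Facts \ref{defrcf} and \ref{onebgammak} yields non-$1$-basedness of $G/G^{00}$. The small multiplicative truncation ($v(b)=0$) and the truncation of $SO_2(K)$ reduce to the additive case: for the former, $G^{00}$ is the group of units $1+\epsilon$ with $\epsilon$ infinitesimal (intersected with $G$), and the definable map $x\mapsto x-1$ gives an order-preserving bijection near the identity with an additive infinitesimal quotient; for $SO_2(K)$ a half-angle parametrization provides a definable bijection onto an additive truncation. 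In both cases non-$1$-basedness is inherited through Lemma \ref{coroneb}.

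The essential novelty is the big multiplicative truncation ($v(b)<0$). Here the image $w(G)$ is a large interval of $\Gamma_v$ properly containing $0$ in its interior, and I would show that $G^{00}$ strictly contains the multiplicative infinitesimals: it is the $w$-preimage of the analogous infinitesimal subgroup of the additive truncation $([v(b),-v(b)),+\mod 2v(b))$ of the value group. The valuation $w$ then descends to a definable bijection between $G/G^{00}$ and the Lie-group quotient of this additive truncation of $\Gamma_v$. Since that target structure is definable purely from the ordered group $\Gamma_v$, and $\Gamma_v$ is $1$-based in $(K,G^{00})$ by Fact \ref{onebgammak}, Lemma \ref{coroneb} transfers $1$-basedness to $G/G^{00}$.

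The main obstacle is the explicit identification of $G^{00}$ in each case, particularly in the big multiplicative setting, where one must verify, using divisibility, torsion-freeness, and minimality of bounded-index type-definable subgroups from Pillay's conjecture, that the multiplicative infinitesimals no longer have bounded index and that the larger $w$-preimage candidate is in fact the smallest such subgroup. Once $G^{00}$ is correctly pinned down in each case, the remainder is a routine production of the required definable bijections and a direct application of the facts already in place.
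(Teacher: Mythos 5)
This paper states the theorem as a quoted result from \cite{Pen1} and does not reprove it; but it applies the very same methodology in Sections 3--4 to the elliptic-curve cases, and your outline follows that template: pin down $G^{00}$ explicitly, observe the cut it determines is valuational so that $(K,G^{00},\dots)^{eq}$ interprets a real closed valued field, exhibit a definable bijection between $G/G^{00}$ and a group living in $\Gamma_w$ (giving $1$-basedness) or in $k_w$ (giving non-$1$-basedness), and conclude via Lemma~\ref{coroneb} and Facts~\ref{defrcf}, \ref{onebgammak}. So the overall approach is the right one and matches the paper's.

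Two details in your sketch need repair. First, for an additive truncation $([-a,a),+\!\!\mod 2a)$ with $a$ not of valuation $0$, $G^{00}$ is \emph{not} the set of elements infinitesimally close to $0$ in the standard sense; it is $\{x\in G: v(x)>v(a)\}$ (equivalently, elements infinitesimal \emph{relative to} $a$), and the resulting quotient is an interval of the sphere $B_{\geq v(a)}(0)/B_{>v(a)}(0)\cong k_w$ rather than of the naive standard residue field; the conclusion persists but the argument must allow for this. Second, the $SO_2(K)$ case cannot be reduced to the additive truncation merely by ``a half-angle parametrization'': the rational (Weierstrass) parametrization of the circle is a definable bijection but not a group homomorphism (the induced operation is $t\oplus s=(t+s)/(1-ts)$), so it does not automatically carry $G^{00}$ of the truncated $SO_2(K)$ onto the $G^{00}$ of an additive truncation. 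You must either argue directly that $G^{00}$ for the circle truncation is determined by a valuational cut (via the torsion-point description, as in equations~(\ref{g00tors})--(\ref{boundinggoo})) and that $G/G^{00}$ lands in $k_w$, or prove separately that the parametrization descends to the quotients. Also, the phrase ``the quotient map $G\to G/G^{00}$ becomes a definable bijection'' should be read as ``$G/G^{00}$ is in definable bijection with an interval of $k_w$ via a suitable scaling/standard-part map''; the quotient map itself is of course far from injective.
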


\vskip 0.3 cm
\subsection{Main theorem and outline of its proof}

The rest of the article is devoted to prove Theorem \ref{thmpaper} below, the outline of the proof is given here, and the details of the proof are carried out in the following sections:

\begin{itemize}
 \item Given an elliptic curve $E$ over $K$, we shall define a notion of minimal form of an elliptic curve, and for curves in minimal form we define three kinds of reductions of their $K$-points.
\end{itemize}

This is done in Section 2.

\begin{itemize}
 \item When we consider $G$ to be the semialgebraic connected component of the $K$-points of an elliptic curve in minimal form over $K$: $E(K)^0$, or $G$ is a truncation of $E(K)^0$, then its unique minimal, bounded index, type-definable subgroup $G^{00}$ determines a valuational cut on $K$.
\end{itemize}

This is proven at the beginning of Section 3 and in Section 4.

\vskip 0.2 cm
We denote the  structure $(K,G^{00},\dots)^{eq}$ by $K'$. In $K'$ the cut above becomes definable and it determines a valuation $w$ on $K$. So, given a group $G$ as above, we canonically determine (definably) in $K'$ a value group $\Gamma_w$ and a residue field $k_w$; therefore $K'$ will be interdefinable with a real closed valued field $K_w^{eq}$, and we shall use this identification throughout the article.

The group $G/G^{00}$ is thus a definable set in $K'$ and it now makes sense to ask whether it is $1$-based or not. We show, case by case, that:

\begin{itemize}
 \item  The group $G/G^{00}$ is in definable bijection with a definable group whose underlying set is a subset of $\Gamma_w^n$ for some curves and of $k_w^n$ for other curves (see the points 1.3 and 2.3 of Theorem \ref{thmpaper} below for details). 
\end{itemize}  

This is proven in Sections 3.1, 3.2 and 4.

We shall identify $G/G^{00}$ with the group it is in definable bijection with using Lemma \ref{coroneb}.

By Theorem $2$ of \cite{HaOn09}, $G/G^{00}$ is stably embedded in $K'$, i.e., every subset of $(G/G^{00})^n$ definable with parameters from $K'$ is definable with parameters from $G/G^{00}$.

This and Remark \ref{mellrem} imply that $T_{G/G^{00}}$ as a definable set in $K'$ equals the theory $T_{G/G^{00}}$ as a definable set of $\Gamma_w$ (resp. $k_w$) seen as a structure on its own, i.e. as an ordered vector space (resp. a real closed field).

Thus, using Fact \ref{onebgammak}, $T_{G/G^{00}}$ is $1$-based if and only if $G/G^{00}$ is in definable bijection with a definable group whose underlying set is a subset of $\Gamma_w^n$.

The full statement of the main theorem is then the following:

\begin{thm}\label{thmpaper}
Given the group $G=E(K)^0$, or $G$ a truncation of $E(K)^0$, where $E$ is an elliptic curve with three ``real'' roots, over a saturated real closed field $K$, the structure $K'$ obtained by adding a predicate for $G^{00}$ to $K$ is interdefinable with a real closed valued field $K_w$.
\vskip 0.2 cm
There are two possible behaviours, either the following set of conditions hold:
\begin{enumerate}
 \item[1.1] The group $G/G^{00}$ is $1$-based in $K'$.

\item[1.2] The group $G/G^{00}$ is in definable bijection with a definable group in $K'$ whose underlying set is a subset of $\Gamma_w^n$.

\item[1.3] 
\begin{itemize}
 \item Either $G=E(K)^0$ and $E$ has split multiplicative reduction, or
\item $G$ is the truncation of $E(K)^0$ by a point $P$ with infinitesimal projection on the $x$-axis, where $E$ is an elliptic curve with split multiplicative reduction.
\end{itemize}
\end{enumerate} 

Or the following conditions hold:
 \begin{enumerate}
 \item[2.1] The group $G/G^{00}$ is non-$1$-based in $K'$.

\item[2.2] The group $G/G^{00}$ is in definable bijection with a definable group in $K'$ whose underlying set is a subset of $k_w^n$.

\item[2.3] 
\begin{itemize}
 \item Either $G=E(K)^0$, or $G$ is a truncation of $E(K)^0$, where $E$ has good or nonsplit multiplicative reduction, or
\item $G$ is the truncation of $E(K)^0$ by a point $P$ with non-infinitesimal projection on the $x$-axis , where $E$ is an elliptic curve with split multiplicative reduction.
\end{itemize}
\end{enumerate} 
\end{thm}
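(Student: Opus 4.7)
The strategy is to reduce each configuration $(G,E)$ to an already analysed case by producing an explicit definable parametrization in which $G^{00}$ and $G/G^{00}$ become transparent. In each case I would work from the minimal Weierstrass form $y^2=(x-e_1)(x-e_2)(x-e_3)$ with $e_1<e_2<e_3\in K$ and describe $E(K)^0$ (or the given truncation) in local affine coordinates adapted to the reduction type.

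\textbf{Step 1 (producing the valuation).} For each case I would first identify $G^{00}$ as the infinitesimal elements for a specific definable parametrization, hence show that its projection on a suitable coordinate is a proper convex subset determining a definable valuational cut in $K'$. Theorem 6.3 of \cite{MMS}, as cited in the excerpt, then gives a nontrivial definable convex valuation $w$, so that $K'$ is interdefinable with a real closed valued field $K_w$ and we may speak of $\Gamma_w$, $R_w$, $I_w$ and $k_w$.

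\textbf{Step 2 (matching $G/G^{00}$ to the correct sort).} When $E$ has split multiplicative reduction two of the $e_i$ are infinitesimally close, so the local picture at the (would-be) node is multiplicative: I would construct a definable surjection $\psi:K^{*}\rightarrow E(K)^0$ (a real closed field analogue of Tate's uniformisation) with kernel generated by a $q$ satisfying $v(q)>0$, and pull back the situation to the multiplicative truncations already studied in \cite{Pen1}. Under $\psi$, the subgroup $G^{00}$ corresponds to the infinitesimal neighbourhood of $1$ in $K^{*}$, so $G/G^{00}$ is in definable bijection through $v$ with an interval of $\Gamma_w$ (modulo $v(q)$ when $G=E(K)^0$). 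When $G$ is a truncation by a point $P$ with infinitesimal $x$-projection, $P$ remains inside the same $K^{*}$-picture near $1$ and the same valuation-based identification works, giving 1.2. In the remaining cases of 2.3 (good or nonsplit multiplicative reduction, or truncations of a split curve by a non-infinitesimal point), the reduction map $E(K)^0\rightarrow\tilde{E}(k_w)^0$ (or its adaptation to a truncation that escapes the infinitesimal regime) has kernel $G^{00}$ and identifies $G/G^{00}$ with a semialgebraic subset of $k_w^n$, giving 2.2.

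\textbf{Step 3 (passing to $1$-basedness).} By the stable embeddedness of $G/G^{00}$ (Theorem 2 of \cite{HaOn09} cited in the excerpt) and Mellor's result (Remark \ref{mellrem}), the theory induced on $G/G^{00}$ in $K'$ agrees with the one induced on its image viewed inside the pure ordered vector space $(\Gamma_w,+,<)$ in case 1, respectively the pure real closed field $k_w$ in case 2. Fact \ref{onebgammak} together with Lemma \ref{coroneb} then yields 1.1 and 2.1. The main obstacle is Step 2: building the uniformisation $\psi$ definably in $K$ and tracking the Weierstrass group law through the reduction requires careful bookkeeping, and the split-multiplicative truncation cases additionally demand a case analysis on the operation $*_{\mod a^2}$ according to whether the truncation point lies in the infinitesimal neighbourhood of the node or not.
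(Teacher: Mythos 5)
Your Steps 1 and 3 follow the paper's blueprint closely: the paper likewise shows that $G^{00}$ is cut out by a valuational condition on the $x$-coordinate (Lemma \ref{e00eq}), invokes Theorem 6.3 of \cite{MMS} to get a definable convex valuation, and then uses stable embeddedness \cite{HaOn09} together with Mellor's elimination \cite{Me06} and Fact \ref{onebgammak} to convert the sort identification into a $1$-basedness verdict. Your treatment of the good and nonsplit multiplicative cases via the reduction map $E_0(K)^0/E_1(K)^0\cong\tilde E_{ns}(\mathbb{R})^0$ is also exactly what the paper does (Proposition \ref{e01iso} and Lemma \ref{goodnonsp}).

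The genuine gap is in Step 2 for the split multiplicative case. You propose to build a \emph{definable} surjection $\psi:K^{*}\to E(K)^0$ as ``a real closed field analogue of Tate's uniformisation'' with kernel $q^{\mathbb{Z}}$. This object does not exist semialgebraically: Tate's parametrisation is given by infinite power series (theta-like sums), which neither converge in a non-complete real closed field nor define first-order definable functions; indeed no semialgebraic group homomorphism from $K^{*}$ (or a quotient of it) onto $E(K)^0$ can exist, since such a map would furnish a definable logarithm-like object forbidden in o-minimal expansions of RCF. The paper sidesteps this entirely: it defines an elementary piecewise map $f_{*}:G\to H$, sending $P$ essentially to $x_P$ or $1/x_P$ according to $\sign(y_P)$, which is \emph{not} a homomorphism on $G$, and then proves by direct valuation estimates on the addition formula (Theorem \ref{defbij}, Corollary \ref{bijcor}) that $f_{*}$ descends to a bijection $G/G^{00}\to H/H^{00}$, where $H$ is the big multiplicative truncation $\big([\epsilon,\tfrac{1}{\epsilon}),*\mod\epsilon^2\big)$. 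So you should replace the uniformisation idea with this cruder, purely semialgebraic comparison map (or give a definable construction of $\psi$, which appears not to be possible). Your sketch of the truncation cases is also thinner than what is needed: the paper's Theorem \ref{trunth} splits into four cases according to whether $S\notin E(K)^{00}$, $v(x_S)<0$, $v(x_S)>0$, or $v(x_S)=0$, with a separate computation of $G^{00}$ and a separate target group ($E(K)^0/E(K)^{00}$ itself, a ball quotient inside $k_w$, a big multiplicative truncation, or a small multiplicative truncation) in each case, and these details cannot be absorbed into the single phrase ``careful bookkeeping.''
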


\begin{rmk}
Conditions 1.2 (resp. 2.2) above can be stated in model theoretic terms as: the group $G/G^{00}$ is internal to $\Gamma_w$ (resp. $k_w$) in $K'$.
\end{rmk}

\section{Elliptic curves}
An introduction to the theory of elliptic curves can be found in the book of Silverman, \cite{Silbook}. Here we briefly recall the main notions and define the algebro-geometric reduction for curves defined in a real closed field.

An \textit{elliptic curve} over a field $F$ is a nonsingular one-dimensional projective curve defined by an equation of the form \[E:~ y^2+a_1xy+a_3y=x^3+a_2x^2+a_4x+a_6,\] where $a_1,\dots, a_6\in F$, plus a point at infinity, denoted by $O$. Given a field $K$, $E(K)=\{(x,y)\in K^2|y^2+a_1xy+a_3y=x^3+a_2x^2+a_4x+a_6\}$ is the \emph{set of $K$-points of $E$}.

When we work in the projective space we define it by $ZY^2+a_1XYZ+a_3YZ^2=X^3+a_2X^2+a_4XZ^2+a_6Z^3$, and the point at infinity is $O=[0:1:0]$.

We can endow $E(K)$ with a group structure, whose identity is $O$.  
Any line will intersect an elliptic curve at precisely three points (also $O$ is a point). Given points $P,Q$, the line through $P$ and $Q$ (or the tangent line if $P=Q$) intersects $E$ at the point $R$. The line between $R$ and $O$ will again intersect $E$ at one point, which we call $R'$. We then define $P\oplus Q$ to be $R'$. We denote the inverse of a point $P$ by $\ominus P$.

There exists also an algebraic definition for this operation, which we will state later, after simplifying the form of the curve.

As any abelian group, $E$ is also a $\mathbb{Z}$-module, with scalar operation denoted by $[m]P$.

Working with a real closed field $K$ and an elliptic curve $E$ defined over $K$, $E(K)$ is a topological group, but with the usual topology of $K$ it is totally disconnected. So, instead of considering the usual connected component of $E(K)$, we consider its \textit{semialgebraic (definable) connected component} $E(K)^0$.

\vskip 0.3 cm
In this article we view $(E(K)^0,\oplus)$ as living in two different categories: model theoretically as a definable group in $K$, to which we can apply the functor $\mathbb{L}$ described in Pillay's conjecture; and algebro-geometrically as the $K$-points of a curve, to which we can apply the reduction map.

Whilst the model theoretic functor is defined intrinsically and can be applied to curves in any form, the reduction map depends on how $E(K)^0$ sits in the ambient space. We need thus to determine a minimal form of the elliptic curve.

To ease the further computations the most obvious choice is to consider the curve in its Legendre form $y^2=x(x-1)(x-\lambda)$, where $\lambda\in K^{\mathrm{alg}}=K[i]$ and $\lambda \neq 0,1$, to ensure non-singularity. If $\lambda\in K$ we say that the elliptic curve has three ``real roots'', where by \textit{root} we mean a point in which $E(K)$ intersects the $y=0$ line. We only discuss curves with real roots in this article.

A translation and a homothety transform our curve into $y^2=x(x+1)(x+\epsilon)$, with $0<\epsilon<1$. Such a curve is said to be in \emph{minimal form} in an analogue for real closed fields of the minimal form for local fields defined in  Proposition 1.3, Chapter $VII$ of \cite{Silbook}.
We can explicitly express the sum and the doubling formulae for curves in minimal form in a relatively simple way:

\be\label{sum}x_{P\oplus Q}=\left(\frac{y_Q-y_P}{x_Q-x_P}\right)^2-(1+\epsilon)-x_Q-x_P, \ee

\be\label{duplicrcurfor}x_{[2]P}=\frac{(x_P^2+\epsilon)^2}{4x_P(x_P+1)(x_P-\epsilon)}, \ee

where a point $P$ is denoted by $P=(x_P,y_P)$.

\subsection{Algebro-geometric reductions}

An important tool in the arithmetic study of elliptic curves defined over local fields is the notion of reduction over the residue field. This topic is developed in Chapter VII of \cite{Silbook}. We present here a description of this tool, adapted to the context of real closed fields.

\vskip 0.2 cm
We suppose that $E$ is an elliptic curve in minimal form defined over a saturated real closed field $K$, and equip $K$ with the standard valuation. 
When we project the $K$-points $E(K)$ of the elliptic curve onto the standard residue field we obtain a curve $\tilde{E}(\mathbb{R})$ which is easier to study. The definition of this operation is delicate and requires some care.

We define the reduction $\tilde{E}$ of a curve $E: y^2=x(x+1)(x+\epsilon)$ to be the curve over $k_v$ defined by $y^2=x(x+1)(x+\mathrm{st}(\epsilon))$, with $st:\Fin\rightarrow \mathbb{R}$ the standard part map.


This gives us a reduction map \[\begin{array}{rl} E(K)&\rightarrow \tilde{E}(\mathbb{R})\\
                                 P&\mapsto \tilde{P}
                                \end{array}\]

defined as follows: given a point $P=(x_P,y_P) \in E(K)$ we rewrite it in homogeneous coordinates: $P=[x_P;y_P;1]$. This clearly can always be rewritten with coefficients in $\Fin$ by multiplying the coordinates by a scalar $\lambda$: $P=[x';y';z']$, with at least one coefficient with valuation $0$. We can now project the coordinates onto the residue field, and $P$ reduces to $\tilde{P}=[st(x');st(y');st(z')]$. We multiply back by $\lambda^{-1}$ to obtain $\tilde{P}=[\lambda^{-1}\left(st(x')\right);\lambda^{-1}\left(st(y')\right);\lambda^{-1}\left(st(z')\right)]$.

 In affine coordinates it is then simply  
\[ \left\{\begin{array}{ll}
   \tilde{P}=(st(x_P),st(y_P)) & ~if~ x,y\in \Fin\\
\tilde{P}=O & ~if~ x,y\notin \Fin.
  \end{array}\right. \]

This operation, however, is not harmless: $\tilde{E}(\mathbb{R})$ may not longer be an elliptic curve, and it could have singularities. The set of nonsingular points of $\tilde{E}(\mathbb{R})$ forms a group, defined over $\mathbb{R}$, denoted by $\tilde{E}_{ns}(\mathbb{R})$.  

We define two subsets of $E(K)$ depending on how the curve reduces: 
\be E_0(K)=\{P\in E(K): \tilde{P}\in\tilde{E}_{ns}(\mR)\},\ee
i.e., the set of all points of $E$ whose reduction is nonsingular, and

\be E_1(K)=\{P\in E(K): \tilde{P}= \tilde{O}\} ~(=\{P\in E(K)|v(x_P)<0\}),\ee
i.e., the set of all points whose reduction is the identity of $\tilde{E}_{ns}(\mR)$.

Having chosen a minimal form for the elliptic curve such notions are well defined.

\vskip 0.3 cm

A useful proposition is the following:

\begin{prop}\label{e01iso}
There is a group isomorphism $E_0(K)/E_1(K)\cong \tilde{E}_{ns}(\mathbb{R})$.
\end{prop}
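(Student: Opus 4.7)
The strategy is the standard one, adapted to the real closed valued setting: build a surjective group homomorphism $\pi : E_0(K) \to \tilde{E}_{ns}(\mathbb{R})$ with kernel exactly $E_1(K)$, then invoke the first isomorphism theorem. The map $\pi$ is just the restriction of the reduction $P \mapsto \tilde{P}$ already defined in the excerpt; by definition of $E_0(K)$, its image lies in $\tilde{E}_{ns}(\mathbb{R})$, and by definition of $E_1(K)$ (noting that $\tilde O$ is always a smooth point of $\tilde E$, so $E_1(K)\subseteq E_0(K)$), its kernel is $E_1(K)$. So the content is (a) $\pi$ is a homomorphism, and (b) $\pi$ is surjective.

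For the homomorphism property I would work directly with the explicit chord–tangent formulas, notably (\ref{sum}) and (\ref{duplicrcurfor}). Given $P,Q \in E_0(K)$ with $P, Q, P\oplus Q \notin E_1(K)$ and $\tilde P \neq \tilde Q$, both sides of (\ref{sum}) lie in $\Fin$ and applying the standard part map (which is a ring homomorphism $\Fin \to \mathbb{R}$) yields exactly the corresponding formula for the sum of $\tilde P$ and $\tilde Q$ in $\tilde{E}_{ns}(\mathbb{R})$; the analogous check for doubling uses (\ref{duplicrcurfor}). The edge cases split naturally: (i) if $\tilde P = \tilde Q$ but $P \neq Q$, then $P\ominus Q \in E_1(K)$, so $\tilde{P\oplus Q} = [2]\tilde P$, which matches; (ii) if $P$ or $Q$ lies in $E_1(K)$, then the line through $P,Q$ becomes vertical after reduction and the formulas limit to the identity law on $\tilde{E}_{ns}(\mathbb{R})$; (iii) if $P \oplus Q \in E_1(K)$, use $Q = \ominus P \oplus (P\oplus Q)$ with the previous cases. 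As a by-product this shows $E_0(K)$ is a subgroup and $E_1(K)$ a normal subgroup.

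For surjectivity, pick $(\bar x, \bar y) \in \tilde{E}_{ns}(\mathbb{R})$. If $\bar y \neq 0$, choose any lift $x \in \Fin$ with $\mathrm{st}(x) = \bar x$; then $f(Y) = Y^2 - x(x+1)(x+\epsilon) \in R_v[Y]$ satisfies $\mathrm{st}(f(\bar y)) = 0$ and $\mathrm{st}(f'(\bar y)) = 2\bar y \neq 0$, so an o-minimal/Hensel lifting argument (the intermediate value property in the real closed field $K$ applied on an infinitesimal interval around any lift of $\bar y$) produces a $y \in \Fin$ with $f(y)=0$ and $\mathrm{st}(y) = \bar y$. If $\bar y = 0$ then $(\bar x, 0)$ is one of the roots $0,-1,-\mathrm{st}(\epsilon)$ of the reduced cubic, and its nonsingularity in $\tilde E$ forces $\mathrm{st}(\epsilon)\notin\{0,1\}$ at the relevant root (otherwise we would have a node); the corresponding $2$-torsion point of $E(K)$, namely one of $(0,0), (-1,0), (-\epsilon,0)$, then lies in $E_0(K)$ and reduces to $(\bar x,0)$. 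The point at infinity is trivially the reduction of $O \in E(K)$.

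The main obstacle is the homomorphism property: it is essential to control what happens when the geometric configuration used to define $\oplus$ degenerates under reduction — vertical tangents, collision $\tilde P = \tilde Q$, or a sum landing at $\tilde O$ — and to verify in each such case that the limiting law on the reduced curve still matches $\oplus$. Once this case analysis is carried out using the minimal-form formulas (\ref{sum}) and (\ref{duplicrcurfor}), surjectivity and the kernel computation are comparatively routine, and the isomorphism follows.
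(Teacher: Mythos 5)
Your argument is the same in substance as the paper's proof, which simply observes that real closed valued fields are Henselian (citing Engler--Prestel) and then defers wholesale to the proof of Proposition VII.2.1 in Silverman --- precisely the argument you reconstruct: the reduction map is a homomorphism because the group law commutes with the standard part map, and surjectivity follows from Hensel-type lifting (here available in $K$ via the intermediate value property, as you note). One small caution for a full write-up: in your edge case (i) the step ``$P\ominus Q\in E_1(K)$, so $\tilde{P\oplus Q}=[2]\tilde P$'' quietly appeals to the very homomorphism property you are in the middle of proving; it should instead be verified directly by comparing the standard part of the chord-slope $(y_P-y_Q)/(x_P-x_Q)$ --- rewritten via the curve equation as a quotient with denominator $y_P+y_Q$ --- with the tangent slope at $\tilde{P}$, so that the addition formula for $P\oplus Q$ is seen to reduce to the doubling formula for $\tilde{P}$ without circularity.
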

\begin{proof}
After observing that a real closed valued field satisfies Hensel's Lemma (this is folklore, a proof of this fact is in Theorem 4.3.7 of \cite{Engbook}), it is sufficient to follow the proof of Proposition 2.1 of Chapter VII of \cite{Silbook}. 
\end{proof}

\vskip 0.2 cm
We easily compute the possible reductions of curves of the form $E:y^2=x(x+1)(x+\epsilon)$, with $0<\epsilon<1$, over the reals:
\begin{rmk}\label{kindred}

We obtain three kinds of curves:
\begin{enumerate}
 \item Good reduction curves: if $v(\epsilon)=0$ and $v(\epsilon-1)=0$, this imples that the standard part of the root $(\epsilon,0)$ does not equal the standard part of any of the other roots, and therefore the reduced curve is nonsingular.
\item Non-split multiplicative reduction curves: if $v(\epsilon-1)>0$, this implies that the root $(\epsilon,0)$ is sent by the standard part map to the root $(-1,0)$, and therefore the reduced curve has a complex node.
\item Split multiplicative reduction curves: if $v(\epsilon)>0$, this implies that the root $(\epsilon,0)$ is sent by the standard part map to the root $(0,0)$, and therefore the reduced curve has a real node.
\end{enumerate}

\end{rmk}

\section{Case study}

To study the relation between intrinsic and algebro-geometric reductions we need to be able to determine $G^{00}$. Proposition $2$ of \cite{Raz} tells us that $G$ is definably circularly ordered. Moreover we can define a dense linear orientation on $G\setminus \{\mathrm{point}\}$. Since $G^{00}$ is a neighbourhood of the identity we choose to remove the ``farthest'' point from $O$: the $2$-torsion point $T_2$, and obtain the orientation $\lhd$ on $G\setminus \{\mathrm{T_2}\}$. By the proof of Proposition $3.5$ of \cite{Pi04d}, $G^{00}$ is bounded by the torsion points of $G$, namely, it is type-defined by:

 \be\label{g00tors} G^{00}=\bigcap_{n\in\omega}\Big\{P| \forall T \left[(T \rhd O\wedge [n]T=O)\rightarrow \ominus T \lhd P\lhd T\right]\Big\},\ee

\begin{defn}\label{boundtors}
We call a \emph{bounding sequence of torsion points} a subsequence $(T_{i_n})_{n\in \omega}$ of the sequence \\$(T_n)_{2<n<\omega}$ of torsion points such that $[n]T_n=O$ (i.e., $T_n$ is an $n$-torsion point), and there is no $n$-torsion point $T$ such that $O\lhd T\lhd T_n$.
\end{defn}

A bounding sequence of torsion points $(T_{i_n})_{n\in \omega}$  easily determines $G^{00}$:
 \be\label{boundinggoo} G^{00}=\bigcap_{2<n<\omega}\{T|\ominus T_{i_n}\lhd T\lhd T_{i_n}\}.\ee

\vskip 0.3 cm
As discussed in the previous section, we suppose from now on that $E$ is $y^2=x(x+1)(x+\epsilon)$, with $0<\epsilon<1$.

Since the duplication formula allows us determine the $2^n$-torsion points, we shall use the bounding sequence: $(T_{2^{n}})_{n>1}$ to compute $G^{00}$. Recall also that $y_{T_{2^{n}}}>0$, and thus $y_{\ominus T_{2^{n}}}<0$.

It is easy to compute directly $T_4$, considering the tangent to the curve passing by $(0,0)$ we determine that $x_{T_4}=\sqrt{\epsilon}$.

For the other points of the bounding sequence we shall just consider an approximation given by taking the standard valuation of their $x$-coordinate. In particular $v\left(x_{T_4}\right)=\frac{1}{2}v(\epsilon)$. The choice of the $4$-torsion points as our starting point for the bounding sequence is no coincidence: for points $P,Q$ such that $T_4\lhd P,Q \lhd O$, the operations of sum and formal multiplication respect the orientation. We thus deduce the convenient inequalities: 
\be\label{formul}v\left(x_{[2]P}\right)\geq v\left(x_{P}\right)\ee and \be\label{sumval}v\left(x_{P\oplus Q}\right)\geq v\left(x_{P}\right),v\left(x_{Q}\right).\ee

\vskip 0.3 cm
We recall and shall often use without further mention the following fact: if $v(a)\neq v(b)$ or $\sign(a)=\sign(b)$, then $v(a+b)= \min\{v(a),v(b)\}$.

\begin{lem}\label{e00eq}
Let $E$ be a curve in the form $y^2=x(x+1)(x+\epsilon)$, with $\epsilon>0$, and $G=E(K)^0$. Then $G^{00}=\bigcap_{n\in \omega}\left\{P\in G|v\left(x_P\right)<\frac{1}{n}v(\epsilon)\right\}$.
\end{lem}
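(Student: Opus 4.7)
My plan is to pin down $v(x_{T_{2^n}})$ for the bounding sequence $(T_{2^n})_{n \geq 2}$ by induction, and then translate the order-theoretic description of $G^{00}$ from (\ref{boundinggoo}) into a valuation-theoretic one.

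\textbf{Step 1 (valuation of the bounding torsion).} I would show by induction on $n \geq 2$ that $v(x_{T_{2^n}}) = \frac{1}{2^{n-1}} v(\epsilon)$. The base case $n=2$ is given by $x_{T_4} = \sqrt{\epsilon}$, already recorded above. For the inductive step, since $[2] T_{2^{n+1}} = T_{2^n}$, I would apply the duplication formula (\ref{duplicrcurfor}) to $T_{2^{n+1}}$. Writing $\alpha = v(x_{T_{2^{n+1}}})$ and postulating $0 < \alpha < \tfrac{1}{2} v(\epsilon)$, the three relevant valuations
\[
v(x_{T_{2^{n+1}}}^2 \pm \epsilon), \quad v(x_{T_{2^{n+1}}} + 1), \quad v(x_{T_{2^{n+1}}} \pm \epsilon)
\]
evaluate by the ``min of valuations'' rule (applicable since the summands have either distinct valuations, or the same sign, as flagged in the text) to $2\alpha$, $0$ and $\alpha$ respectively. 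Taking valuations in (\ref{duplicrcurfor}) then yields
\[
\tfrac{1}{2^{n-1}} v(\epsilon) \;=\; v(x_{T_{2^n}}) \;=\; 4\alpha - \alpha - 0 - \alpha \;=\; 2\alpha,
\]
so $\alpha = \tfrac{1}{2^n} v(\epsilon)$, which lies in $(0, v(\epsilon)/2)$ since $n \geq 2$, closing the induction. Among the $2^{n+1}$-torsion preimages of $T_{2^n}$ inside $E(K)^0$, the bounding one must still be singled out; the inequality (\ref{formul}) ensures that formal doubling does not decrease $v(x_P)$ on the relevant arc, so the preimage closest to $O$ in the orientation $\lhd$ is precisely the one with smallest $v(x)$, i.e.\ the one we have computed.

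\textbf{Step 2 (order-to-valuation dictionary).} The unbounded semialgebraic component $E(K)^0$ is parameterized on each of its two $y$-branches by $x \in [0, \infty)$, with $O$ at $x = \infty$ and $T_2 = (0, 0)$ at $x = 0$; so the orientation $\lhd$ measures distance from $O$ via $x$ decreasing from $\infty$ to $0$. Therefore $\ominus T_{2^n} \lhd P \lhd T_{2^n}$ in (\ref{boundinggoo}) translates to $x_P > x_{T_{2^n}}$ (with the convention $x_O = \infty$). Since both are positive, this forces $v(x_P) \leq v(x_{T_{2^n}}) = \tfrac{1}{2^{n-1}} v(\epsilon)$, while conversely $v(x_P) < v(x_{T_{2^n}})$ already gives $x_P > x_{T_{2^n}}$. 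Combining with (\ref{boundinggoo}) and Step 1,
\[
G^{00} \;=\; \bigcap_{n \geq 2} \bigl\{P \in G : v(x_P) \leq \tfrac{1}{2^{n-1}} v(\epsilon)\bigr\}.
\]
The subsequence $(\tfrac{1}{2^{n-1}} v(\epsilon))_n$ is cofinal below $\{\tfrac{1}{k} v(\epsilon) : k \in \omega\}$ in the divisible hull of $\Gamma_v$, so intersecting over $n$ makes the $\leq$- and $<$-descriptions coincide, giving the stated form $\bigcap_n \{P \in G : v(x_P) < \tfrac{1}{n} v(\epsilon)\}$.

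The main obstacle I anticipate is the identification in Step 1 of the correct $2^{n+1}$-torsion preimage of $T_{2^n}$ (the duplication condition being of degree $4$ in $x_P$) together with the maintenance of the inductive hypothesis $\alpha \in (0, v(\epsilon)/2)$ at every stage; it is precisely here that the orientation-preserving inequality (\ref{formul}) does the heavy lifting, while the actual valuation computation is a routine application of the ``min'' rule.
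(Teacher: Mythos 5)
Your Step 1 and Step 2 follow essentially the same route as the paper: compute $v(x_{T_{2^n}})$ via the duplication formula starting from $x_{T_4}=\sqrt{\epsilon}$, then convert the order-theoretic description of $G^{00}$ in terms of the bounding torsion sequence into a valuation-theoretic one. However, there are two gaps, one of which is serious.

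The serious gap is that you have implicitly restricted to the case $v(\epsilon)>0$. The lemma only assumes $\epsilon>0$, so it also covers curves with $v(\epsilon)=0$ (good or nonsplit multiplicative reduction). In that case your inductive hypothesis ``$0<\alpha<\tfrac{1}{2}v(\epsilon)$'' is vacuous (the interval is empty), so Step 1 does not apply; and in Step 2 the bound $v(x_P)\leq \tfrac{1}{2^{n-1}}v(\epsilon)$ collapses to $v(x_P)\leq 0$ for every $n$, which gives only $\{v(x_P)<0\}\subseteq G^{00}\subseteq\{v(x_P)\leq 0\}=\Fin$. The intersection no longer distinguishes $<$ from $\leq$, and you need a separate argument showing that the sequence $x_{T_{2^n}}$ is cofinal in $\Fin$ (so that no point with $v(x_P)=0$ can beat all $x_{T_{2^n}}$). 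The paper handles this explicitly: from the doubling relation one gets $x_{T_{2^{n-1}}}<\tfrac{1}{4}x_{T_{2^n}}$, hence $x_{T_{2^n}}>4^{n-2}x_{T_4}$, giving cofinality. Without this your proof does not establish the lemma for $v(\epsilon)=0$.

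The minor gap is in the logic of the induction. You ``postulate'' $0<\alpha<\tfrac12 v(\epsilon)$ and then check a posteriori that the derived value satisfies it; but the bound must be \emph{established} before the ``min of valuations'' computation is licit, since $v(x_{T_{2^{n+1}}}^2-\epsilon)>2\alpha$ is possible when $2\alpha=v(\epsilon)$. For $n\geq 3$ the inductive hypothesis together with (\ref{formul}) gives the strict bound $\alpha\leq v(x_{T_{2^n}})=\tfrac{1}{2^{n-1}}v(\epsilon)<\tfrac12 v(\epsilon)$. But at the first step ($n=2$, i.e.\ going from $T_4$ to $T_8$) this only yields $\alpha\leq\tfrac12 v(\epsilon)$, not $<$, and you need the short contradiction argument the paper gives: if $v(x_{T_8})=\tfrac12 v(\epsilon)$ then the doubling formula forces $v(x_{T_4})\geq v(\epsilon)$, contradicting $v(x_{T_4})=\tfrac12 v(\epsilon)<v(\epsilon)$.
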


\begin{proof}

It is sufficient to prove that, for $n\geq 2$, $v\left(x_{T_{2^{n-1}}}\right)=\frac{1}{2}v\left(x_{T_{2^n}}\right)$, for $T_{2^n}$ a bounding sequence of torsion points. In fact by (\ref{g00tors}), and by symmetry of the curve with respect to the $x$-axis, \[ G^{00}=\bigcap_{n\in \omega}\{P|v(x_P)\leq v\left(x_{T_{2^n}}\right)\}.\]


We have two cases:

\begin{enumerate}
 \item If $v(\epsilon)= 0$, by induction we may assume $v\left(x_{T_{2^{n-1}}}\right)=0$, by the observations above $0=v\left(x_{T_{2^{n-1}}}\right)\geq v\left(x_{T_{2^n}}\right)\geq 0$, so $v\left(x_{T_{2^n}}\right)=0$, and thus the equality above is verified.

For this case we also need to check that the torsion points have cofinal projection in $\Fin$, but $x_{T_{2^{n-1}}}=\frac{1}{4}\frac{\left(x_{T_{2^{n}}}^2-\epsilon\right)^2}{x_{T_{2^{n}}}\left(x_{T_{2^{n}}}+1\right)\left(x_{T_{2^{n}}}+\epsilon\right)}<\frac{x^4_{T_{2^{n}}}}{4x^3_{T_{2^{n}}}}=\frac{1}{4}x_{T_{2^{n}}}$.

From which $x_{T_{2^{n}}}>\frac{1}{4^{n-2}}x_{T_4}=\frac{1}{4^{n-3}}\epsilon$. So, for each $m\in \Fin$, there is $n$ such that $x_{T_{2^{n}}}>m$, i.e., the bounding sequence of torsion points has cofinal projection in $\Fin$.

\item If $v(\epsilon)> 0$, using the duplication formula we get:\\
 $v\left(x_{T_{2^{n-1}}}\right)=v\left(\frac{1}{4}\frac{\left(x_{T_{2^n}}^2-\epsilon\right)^2}{x_{T_{2^n}}\left(x_{T_{2^n}}+1\right)\left(x^2_{T_{2^n}}+\epsilon\right) }\right)=$
$2v\left(x^2_{T_{2^n}}-\epsilon\right)-v\left(x_{T_{2^n}}\right)-v\left(x_{T_{2^n}}+1\right)-v\left(x_{T_{2^n}}+\epsilon\right)=$

$\big($ since $v\left(x_{T_{2^n}}+1\right)=0$ and $v\left(x_{T_{2^n}}+\epsilon\right)=v\left(x_{T_{2^n}}\right)$ $\big)$\\

$=2v\left(x^2_{T_{2^n}}-\epsilon\right)-2v\left(x_{T_{2^n}}\right)$.

Observe that $v\left(x^2_{T_{2^n}}-\epsilon\right)=v\left(x^2_{T_{2^n}}\right)$, in fact otherwise $v\left(x_{T_{2^n}}\right)=\frac{1}{2}v(\epsilon)$ and so $\frac{1}{2}v(\epsilon)=v\left(x_{T_{4}}\right)> 2v\left(x^2_{T_{8}}\right)-2v\left(x_{T_{8}}\right)=2v\left(x_{T_{8}}\right)=v(\epsilon)$, contradicting $v(\epsilon)>0$.

Then we have $v\left(x_{T_{2^{n-1}}}\right)=2v\left(x^2_{T_{2^n}}\right)-2v\left(x_{T_{2^n}}\right)=2v\left(x_{T_{2^n}}\right)$ and we have proved the lemma.

\end{enumerate}

\end{proof}

We observe that the projection $\alpha$ onto the $x$-axis of $G^{00}$ is a valuational cut. We recall that $\alpha$ is valuational if there exists an $\epsilon\in K^{>0}$ such that $\alpha+\epsilon=\alpha$. This is witnessed by the same $\epsilon$ defining $G$. There is therefore a unique valuation $w$, not necessarily the standard one, associated to $G^{00}$, definable in $K'=(K,G^{00},\dots)^{eq}=K^{eq}_w$.

\vskip 0.2 cm
We now study which elliptic curves $G=E(K)^0$ determine $G/G^{00}$ $1$-based in $K'$, and relate the map $G\rightarrow G/G^{00}$ to the behaviour of $E(K)^0$ when reduced over the standard residue field. 

We have three possible kinds of reduction; see Remark \ref{kindred}.

\vskip 0.3 cm

\subsection{The good reduction and the non-split multiplicative reduction cases}

These are the cases of a curve $E:y^2=x(x+1)(x+\epsilon)$ in minimal form, with $v(\epsilon)=0$. We show that for such cases the intrinsic and the algebro-geometric reductions coincide (at least when we consider the semialgebraic connected component $E(K)^0$).

In fact the algebro-geometric reduction leads to the curve $\tilde{E}(\mathbb{R}): y^2=x(x+1)(x-st(\epsilon))$.
\vskip 0.2 cm

Clearly then $E(K)^0=E_0(K)^0$, and, by Lemma \ref{e00eq}, \[E_1(K)^0=\{P\in E(K)| v(x_P)<0\}=G^{00}.\] 

This, together with Proposition \ref{e01iso}, implies that

\be\label{gored} G/G^{00}=E(K)^0/E(K)^{00}=E_0(K)^0/E_1(K)^0\cong \tilde{E}^0(\mathbb{R}).\ee

\vskip 0.3 cm

We add to $K$ a predicate for $G^{00}$: let $K'=(K,G^{00},\dots)^{eq}$. The valuational cut determined by $G^{00}$ induces the standard valuation on $K'$: we can in fact define in $K'$ the sets $\Fin$ and $\mu$: \be \Fin=\left\{x\in K| \exists y\in K \Big((x,y)\notin G^{00} \wedge (-x,y)\notin G^{00} \Big)\right\},\ee
\be \mu=\left\{x\in K| x^{-1}\notin \Fin\right\}.\ee

Clearly in the standard real closed valued field (with symbols for the imaginaries) $K_v=(K,\Fin,\mu,v,\dots)^{eq} $ the set $G^{00}$ is definable, so $K'$ is interdefinable with $K_v^{eq}$.

Moreover $G/G^{00}$ is definably isomorphic in $K'$ to the group $E^0(\mathbb{R})$, that is a definable group with underlying set in $k_v$. By Fact \ref{onebgammak}, $k_v$ is non-$1$-based in $K'$ and by Lemma \ref{coroneb} and Fact \ref{defrcf} also $G/G^{00}$ is non-$1$-based in $K'$.

We have therefore proved the following lemma:
\begin{lem}\label{goodnonsp}
 Given an elliptic curve $E$ in minimal form, and such that $E(K)$ has good or nonsplit multiplicative reduction, the group $G/G^{00}$, where $G=E(K)^0$, is non-$1$-based in $K'=(K,G^{00},\dots)^{eq}$ and is definably isomorphic to a group with underlying set in $k_v$, the residue field of the standard real closed valued field interdefinable with $K'$.
\end{lem}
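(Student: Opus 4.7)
The plan is to combine the computation of $G^{00}$ in Lemma \ref{e00eq} with the algebro-geometric identification provided by Proposition \ref{e01iso}, and then invoke the transfer principle for $1$-basedness from Lemma \ref{coroneb} together with Fact \ref{onebgammak} to conclude.

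First I would exploit the hypothesis $v(\epsilon)=0$. By Lemma \ref{e00eq}, for any $n\in\omega$ the condition $v(x_P)<\tfrac{1}{n}v(\epsilon)=0$ cuts out $G^{00}$, so $G^{00}=\{P\in G\mid v(x_P)<0\}$. On the algebro-geometric side, the definition of $E_1(K)$ yields the same set intersected with $G$, hence $G^{00}=E_1(K)^0$. Since $v(\epsilon)=0$ and $v(\epsilon-1)=0$ implies $\tilde{E}(\mR)$ is non-singular (or has only a complex node, in which case the real semialgebraic connected component still lies in the non-singular locus), we also have $E(K)^0\subseteq E_0(K)$, so $G=E_0(K)^0$. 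Proposition \ref{e01iso} then gives the isomorphism $G/G^{00}\cong\tilde{E}^0(\mR)$ displayed in \eqref{gored}.

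Next I would show that $K'$ is interdefinable with $K_v^{eq}$. The set $G^{00}$ is a predicate in $K'$, and I would define $\Fin$ and $\mu$ in $K'$ via
\be
\Fin=\bigl\{x\in K\mid \exists y\ (x,y)\notin G^{00}\wedge(-x,y)\notin G^{00}\bigr\},\qquad \mu=\{x\in K\mid x^{-1}\notin\Fin\},
\ee
exactly as indicated in the discussion preceding the lemma; conversely $G^{00}$ is clearly definable from $\Fin$ and $\mu$ together with the defining equation of $E$. Once this interdefinability is recorded, the isomorphism of the previous step makes $G/G^{00}$ definably isomorphic in $K'$ to the group $\tilde{E}^0(\mR)$, whose underlying set is a subset of $k_v^2$.

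Finally, since $\tilde{E}^0(\mR)$ is an infinite set definable in the field $\mR=k_v$, Fact \ref{defrcf} gives that it is non-$1$-based as a set in $k_v$. By Remark \ref{mellrem} and the stable embeddedness of $G/G^{00}$ in $K'$ (as quoted in the outline of Theorem \ref{thmpaper} from \cite{HaOn09}), the induced theory of $\tilde{E}^0(\mR)$ in $K'$ coincides with its induced theory as a definable set of the pure real closed field $k_v$, so non-$1$-basedness transfers from $k_v$ to $K'$; applying Lemma \ref{coroneb} to the definable bijection $G/G^{00}\to\tilde{E}^0(\mR)$ (after passing to a circularly ordered realization, which is available since $G$ is circularly ordered by Proposition $2$ of \cite{Raz}) yields that $G/G^{00}$ is non-$1$-based in $K'$. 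The only subtle step is ensuring the isomorphism in \eqref{gored} is actually definable in $K'$ and respects the ordering structure needed by Lemma \ref{coroneb}; this should follow because both sides are built from the same equation $y^2=x(x+1)(x+\epsilon)$ with $\epsilon$ replaced by $\mathrm{st}(\epsilon)$, and the standard part map is definable in $K_v^{eq}$, hence in $K'$.
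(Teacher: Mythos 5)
Your proof follows essentially the same route as the paper: compute $G^{00}=E_1(K)^0$ via Lemma \ref{e00eq}, use Proposition \ref{e01iso} to get $G/G^{00}\cong\tilde{E}^0(\mR)$, define $\Fin$ and $\mu$ from $G^{00}$ to obtain the interdefinability $K'\leftrightarrow K_v^{eq}$, and then transfer non-$1$-basedness from $k_v$ via Fact \ref{onebgammak}, Fact \ref{defrcf}, Mellor's theorem and Lemma \ref{coroneb}. The extra remarks you add (why $E(K)^0\subseteq E_0(K)$ in the nonsplit case, and the definability of the isomorphism) are welcome but not departures from the paper's argument.
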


\vskip 0.2 cm

Whilst in the good reduction case (i.e. when $v(\epsilon-1)=0$) the definable (in $K'$) isomorphism of groups $ G/G^{00}\cong \tilde{E}^0(\mathbb{R})$ extends naturally to an isomorphism $E(K)/E(K)^{00}\cong\tilde{E}(\mathbb{R})$ with the reduced curve; the difference between intrinsic and algebro-geometric reductions unveils when we look at Lie structure of the whole curve when we have nonsplit multiplicative reduction.
In the non-split multiplicative reduction case the algebro-geometric reduction leads to a singular curve with a ``complex node'' at the point $(-1,0)$.
By Exercise 3.5, page 104 of \cite{Silbook}, $\tilde{E}(\mathbb{R})^0\cong SO_2(\mathbb{R})$ as a Lie group.
 
So, applying the algebro-geometric reduction to $E(K)$, we obtain a connected component isomorphic to $SO_2(\mathbb{R})$ and an isolated point $(-1,0)$, whereas the image $E(K)/E(K)^{00}$ of $E(K)$ under the functor $\mathbb{L}$ is still a nonsingular curve, with the two connected components in bijection and therefore both isomorphic to $SO_2(\mathbb{R})$.

\vskip 0.3 cm
\subsection{The split multiplicative reduction case}

This is the case of a curve $E:y^2=x(x+1)(x+\epsilon)$ where $v(\epsilon)>0$; the algebro-geometric reduction of $E(K)$ is then a curve with a singularity, more precisely a real node, at $(0,0)$.

\vskip 0.2 cm
We denote by $H$ the group $\left([\epsilon,\frac{1}{\epsilon}),*~mod~\epsilon^2\right)$ (a ``big'' truncation of the multiplicative group by $\epsilon$). Theorem 4.10 of \cite{Pen1} states that the group $H/H^{00}$ is $1$-based in $K_{H^{00}}=(K,H^{00},\dots)^{eq}$. 
To obtain $1$-basedness for $G/G^{00}$ in $K'=(K,G^{00},\dots)^{eq}$ from the known case of the ``big'' multiplicative truncation, it will suffice, by Lemma \ref{coroneb}, to show that $K_{H^{00}}$ is interdefinable with $K'$, and to find a definable bijection $f:G/G^{00}\rightarrow H/H^{00}$.

\vskip 0.2 cm

We denote by $P$ a point in $G$ and by $P_\sim$ the class in $G/G^{00}$ of which it is a representative. Analogously we denote by $x$ an element of $H$ and by $x_\sim$  an element in $H/H^{00}$. 

We firstly define a map $f_*: G\rightarrow H$ as follows:
\[ f_*(P)=\left\{\begin{array}{cl}
               1& ~\mathrm{if} ~x_P\geq 1,\\
\\
               \left(\frac{1}{x_P}\right)& ~\mathrm{if}~ y_P\geq0 \wedge \epsilon< x_P< 1 ,\\
\\
 x_P& ~\mathrm{if}~ y_P<0\wedge \epsilon<x_P < 1,\\

\epsilon& ~\mathrm{if}~x_P\leq \epsilon.

              \end{array}\right.\]

We prove that $f_*$ induces a well-defined bijection $f:G/G^{00} \rightarrow H/H^{00}$ on the quotients. 
Due to the definition of $f_*$ it is necessary to consider separately the cases of $G^{00}$ and of $(T_2)_{\sim}$.

\begin{lem}
The map $f_*$ sends $G^{00}$ to $H^{00}$, and so $f(O_{\sim})=1_{\sim}$.
\end{lem}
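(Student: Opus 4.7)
My plan is to verify the inclusion $f_*(G^{00})\subseteq H^{00}$ by running through the four branches of the piecewise definition of $f_*$ and showing that only three of them can actually be active on $G^{00}$, and that each of those three produces an element of $H^{00}$.

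First I would recall (from Lemma \ref{e00eq}) the characterization
\[
G^{00}=\bigcap_{n\in\omega}\{P\in G\mid v(x_P)<\tfrac{1}{n}v(\epsilon)\},
\]
and pair it with the analogous description of $H^{00}$ for the big multiplicative truncation $H=\left([\epsilon,\tfrac{1}{\epsilon}),*\bmod\epsilon^2\right)$: an entirely parallel computation to Lemma \ref{e00eq}, using that the smallest $n$-torsion of $H$ just above $1$ sits at $\epsilon^{-2/n}$, gives
\[
H^{00}=\bigcap_{n\in\omega}\{t\in H\mid |v(t)|<\tfrac{2}{n}v(\epsilon)\},
\]
i.e.\ $t\in H^{00}$ exactly when $v(t)$ is infinitesimal relative to $v(\epsilon)$. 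This is essentially the content that underlies Theorem 4.10 of \cite{Pen1}.

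Then, for $P\in G^{00}$, I would inspect the four clauses of $f_*$. If $x_P\geq 1$ (including $P=O$), then $f_*(P)=1$, the identity of $H$, hence in $H^{00}$. If $\epsilon<x_P<1$, then regardless of the sign of $y_P$ we have $f_*(P)\in\{x_P,1/x_P\}$, so $|v(f_*(P))|=v(x_P)$; using $0\leq v(x_P)<\tfrac{1}{n}v(\epsilon)<\tfrac{2}{n}v(\epsilon)$ for every $n$, membership in $H^{00}$ follows. The final clause $x_P\leq \epsilon$ forces $v(x_P)\geq v(\epsilon)$, which contradicts the $n=1$ instance $v(x_P)<v(\epsilon)$ of the defining condition of $G^{00}$; so this branch is vacuous on $G^{00}$.

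The conclusion is then immediate: $f_*(G^{00})\subseteq H^{00}$, and since $O\in G^{00}$ gets sent to $1\in H^{00}$ the induced quotient map satisfies $f(O_\sim)=1_\sim$. The only subtlety I anticipate is making sure the description of $H^{00}$ is stated in a form compatible with the later use of Lemma \ref{coroneb}; the rest is routine case-checking driven by the characterizations of the two infinitesimal subgroups.
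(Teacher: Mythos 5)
Your argument is correct and follows essentially the same route as the paper: write down the valuation-theoretic characterizations of both $G^{00}$ (from Lemma \ref{e00eq}) and $H^{00}$ and compare them through $f_*$. The one cosmetic difference is that you derive $H^{00}=\bigcap_n\{t:|v(t)|<\tfrac{2}{n}v(\epsilon)\}$ from the torsion points $\epsilon^{-2/n}$, whereas the paper writes $H^{00}=\bigcap_n\{x:\epsilon<x^n<\tfrac1\epsilon\}=\bigcap_n\{x:|v(x)|<\tfrac1n v(\epsilon)\}$; these two intersections are the same set, so nothing changes. Your version is in fact more careful than the paper's terse ``Thus $f_*(G^{00})=H^{00}$'': you go through the branches of $f_*$ explicitly, handle the branch $x_P\ge 1$ (which absorbs all points of $G^{00}$ with $v(x_P)\le 0$, including $O$), and correctly observe that the fourth clause $x_P\le\epsilon$ is vacuous on $G^{00}$ because $0<x_P\le\epsilon$ would force $v(x_P)\ge v(\epsilon)$, against the $n=1$ instance of the defining condition. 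Note that you only establish the inclusion $f_*(G^{00})\subseteq H^{00}$ rather than the equality the paper asserts, but the inclusion is exactly what the lemma's conclusion $f(O_\sim)=1_\sim$ requires, so this is harmless.
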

\begin{proof}
 We recall Lemma \ref{e00eq}:\\ $$G^{00}= \bigcap_{n\in\omega}\left\{P|~ v(x_P)<\frac{1}{n}v(\epsilon)\right\}.$$ It easy to see that $H^{00}=\bigcap_{n\in\omega}\left\{x|~\epsilon<x^n<\frac{1}{\epsilon}\right\}=\bigcap_{n\in\omega}\left\{x|~ \left| v(x) \right| <\frac{1}{n}v(\epsilon)\right\}$. Thus $f_*(G^{00})=H^{00}$, and then $f(O_{\sim})=1_{\sim}$.
\end{proof}

\vskip 0.2 cm

We characterize $(T_2)_\sim$ via the valuation of the projection of its points on the $x$-axis.

\begin{lem}\label{classa}
 We have  $(T_2)_\sim=\bigcap_{n\in\omega}\left\{P\in G|v(x_P)\geq \frac{n-1}{n}v(\epsilon)\right\}.$
\end{lem}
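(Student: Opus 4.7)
The plan is to translate the statement to a statement about $G^{00}$ via the $2$-torsion point $T_2$: since $\ominus T_2 = T_2$, we have $P \in (T_2)_\sim$ if and only if $P \oplus T_2 \in G^{00}$, and Lemma \ref{e00eq} already describes $G^{00}$ by a valuational condition on $x$-coordinates. Everything therefore reduces to computing $x_{P \oplus T_2}$ in terms of $x_P$.

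I would use the sum formula (\ref{sum}) with $Q = T_2 = (0,0)$, obtaining
\[ x_{P \oplus T_2} = \frac{y_P^2}{x_P^2} - (1 + \epsilon) - x_P, \]
and then replace $y_P^2$ by $x_P(x_P+1)(x_P+\epsilon)$ using the curve equation. The resulting expression telescopes to the clean form
\[ x_{P \oplus T_2} = \frac{\epsilon}{x_P}, \]
i.e.\ the classical translation-by-$(0,0)$ formula, reflecting the $2$-isogeny structure of a Weierstrass cubic with a rational $2$-torsion point. The boundary cases $P = T_2$ and $P = O$ are accommodated by interpreting $v(0) = +\infty$: in the first, $P \oplus T_2 = O \in G^{00}$ and the valuational condition holds vacuously; in the second, $P \oplus T_2 = T_2 \notin G^{00}$, consistent with $O \notin (T_2)_\sim$.

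Once the identity $x_{P \oplus T_2} = \epsilon/x_P$ is established, Lemma \ref{e00eq} converts the condition $P \oplus T_2 \in G^{00}$ into $v(\epsilon) - v(x_P) < \frac{1}{n}v(\epsilon)$ for every $n$, i.e., $v(x_P) > \frac{n-1}{n}v(\epsilon)$ for every $n$. This strict condition is equivalent to the non-strict condition appearing in the lemma: clearly the strict form implies the non-strict, and conversely, given $v(x_P) \geq \frac{m-1}{m}v(\epsilon)$ for all $m$, applying it with $m = 2n$ yields $v(x_P) \geq \frac{2n-1}{2n}v(\epsilon) > \frac{n-1}{n}v(\epsilon)$ for any fixed $n$. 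There is no real obstacle; the only calculation of substance is the simplification producing $\epsilon/x_P$, which is routine given the curve equation, and the rest is a direct consequence of Lemma \ref{e00eq}.
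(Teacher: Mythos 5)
Your proof is correct and follows essentially the same route as the paper: translate membership in $(T_2)_\sim$ to membership of $P\oplus T_2$ in $G^{00}$, use the addition formula with $T_2=(0,0)$ and the curve equation to compute the valuation, then apply Lemma \ref{e00eq}. Your explicit observation that $x_{P\oplus T_2}=\epsilon/x_P$ is a slightly cleaner way to obtain the valuation identity $v(x_{P\oplus T_2})=v(\epsilon)-v(x_P)$, which the paper derives by direct cancellation (with a small typo $-2v(x_P)$ in place of $-v(x_P)$ along the way), and your remark on the equivalence of the strict and non-strict inequalities makes explicit a step the paper leaves tacit.
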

\begin{proof}
By definition $P\in (T_2)_\sim$ if and only if $ P\ominus {T_2}\in G^{00}$ if and only if $v(P\ominus {T_2})<\frac{1}{n}v(\epsilon)$, for all $n$.

Then, using (\ref{sum}), $v(x_{P\ominus {T_2}})=v\left(\frac{y_P^2}{x_P^2}-1-\epsilon-x_P\right)=$\\$=v\left(\frac{(x_P+1)(x_P+\epsilon)}{x_P}-1-\epsilon-x_P\right)= v\left(x_P^2+x_P+\epsilon x_P+\epsilon-x_P-\epsilon x_P-x_P^2\right)-2v(x_P)=v(\epsilon)-v(x_P)$.  So  $v(x_{P\ominus {T_2}})<\frac{1}{n}v(\epsilon)$, for all $n$, if and only if $v(x_P)\geq \frac{n-1}{n}v(\epsilon)$, for all $n$.
\end{proof}

In $H/H^{00}$ the class of the $2$-torsion $h_2=\epsilon$ is \[(h_2)_{\sim}= \left\{x\in H| |v(h)|\geq \frac{n-1}{n}v(\epsilon)\right\}.\] The proof of the following lemma is now immediate.

\begin{lem}
 The map $f$ sends $(T_2)_\sim$ to $(h_2)_{\sim}$. 
\end{lem}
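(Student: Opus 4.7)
The plan is to verify the stated equality by picking an arbitrary representative $P$ of $(T_2)_\sim$ and tracking it through the case split in the definition of $f_*$, matching the output against the explicit description of $(h_2)_\sim$ given in the displayed formula just above the statement.

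The first observation is that, by Lemma \ref{classa}, $P\in (T_2)_\sim$ forces $v(x_P)\geq \frac{n-1}{n}v(\epsilon)$ for every $n\geq 2$; in particular $v(x_P)\geq \tfrac{1}{2}v(\epsilon)>0$, so $x_P$ is infinitesimal and the branch $x_P\geq 1$ of the definition of $f_*$ does not occur for such $P$. This reduces the problem to exactly two cases: $x_P\leq \epsilon$ and $\epsilon<x_P<1$.

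In the first case, $f_*(P)=\epsilon=h_2$ by definition, so $f_*(P)$ is automatically a representative of $(h_2)_\sim$, since $|v(\epsilon)|=v(\epsilon)\geq \frac{n-1}{n}v(\epsilon)$ for all $n$. In the second case $f_*(P)$ equals either $x_P$ (if $y_P<0$) or $x_P^{-1}$ (if $y_P\geq 0$); since $v(x_P)>0$ while $x_P\geq \epsilon$ gives $v(x_P)\leq v(\epsilon)$, in either subcase $|v(f_*(P))|=v(x_P)$, and this lies in the interval controlling $(h_2)_\sim$ precisely because of the membership condition for $(T_2)_\sim$ provided by Lemma \ref{classa}. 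Putting the three subcases together shows $f_*\bigl((T_2)_\sim\bigr)\subseteq (h_2)_\sim$, i.e.\ $f\bigl((T_2)_\sim\bigr)=(h_2)_\sim$ as classes in $H/H^{00}$.

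There is no real obstacle here: the content has been isolated into Lemma \ref{classa} and the formula for $(h_2)_\sim$, so the remaining argument is only a bookkeeping verification that the piecewise rules defining $f_*$ respect the valuation-theoretic description of each torsion class. The only point to be careful about is the comparison $v(x_P)\leq v(\epsilon)$ in the middle branch, which is immediate from $x_P>\epsilon>0$ together with $w$ being a valuation.
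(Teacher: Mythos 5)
Your proof is correct and is precisely the ``immediate'' verification the paper declines to write out: you invoke Lemma \ref{classa} to pin down $v(x_P)$, rule out the $x_P\geq 1$ branch, and check that each remaining branch of $f_*$ lands a representative in $(h_2)_\sim$. The only thing worth noting is that the paper itself offers no proof beyond pointing to Lemma \ref{classa} and the displayed description of $(h_2)_\sim$, so your write-up is a faithful expansion of exactly the argument the author had in mind.
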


\vskip 0.3 cm

We want to prove for all the other cases that the map $f$ is well-defined.

\begin{thm}\label{defbij}
The map $f$ is a well-defined function $G/G^{00}\rightarrow H/H^{00}$.
\end{thm}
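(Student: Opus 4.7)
The plan is to reduce well-definedness of $f$ to a case analysis on the four cases of $f_*$. By the two preceding lemmas, $f$ is already well-defined on the classes $O_\sim$ and $(T_2)_\sim$, mapping them respectively to $1_\sim$ and $(h_2)_\sim$. Hence it suffices to consider $P, Q \in G$ with $P \ominus Q \in G^{00}$ and $P, Q \notin O_\sim \cup (T_2)_\sim$, and to show that $f_*(P) \cdot f_*(Q)^{-1}$ (in the group $H$) lies in $H^{00}$. From Lemmas \ref{e00eq} and \ref{classa}, the condition $P, Q \notin O_\sim \cup (T_2)_\sim$ forces $v(x_P)$ and $v(x_Q)$ to sit inside $(0, v(\epsilon))$ in a non-infinitesimal way; in particular $\epsilon < x_P, x_Q < 1$ strictly, so only cases $2$ and $3$ of the definition of $f_*$ apply, and $f_*(P), f_*(Q)$ are $x_P^{\pm 1}, x_Q^{\pm 1}$ according to the signs of $y_P, y_Q$.

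The first substantive step is to show that $y_P$ and $y_Q$ must have the same sign. Applying the sum formula (\ref{sum}) to $P \oplus (\ominus Q)$ yields
\[
x_{P \ominus Q} = \frac{(y_P + y_Q)^2}{(x_Q - x_P)^2} - (1 + \epsilon) - x_P - x_Q.
\]
Using the identity $v(y_P) = v(x_P)$ (valid in the middle range because $v(x_P + 1) = 0$ and $v(x_P + \epsilon) = v(x_P)$, so $v(y_P^2) = 2 v(x_P)$), and likewise for $Q$, I would compute that when $y_P$ and $y_Q$ carry opposite signs the leading-order terms of the two summands on the right cancel exactly, so that $v(x_{P \ominus Q}) \geq v(\epsilon)$. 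By Lemma \ref{classa} this places $P \ominus Q$ in $(T_2)_\sim$, contradicting $P \ominus Q \in G^{00}$.

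With matching $y$-signs established, the second step is to show, by a similar valuation computation on the same formula, that $P \ominus Q \in G^{00}$ forces $|v(x_P) - v(x_Q)| < \frac{1}{n} v(\epsilon)$ for every $n$: to leading order one reads off $v(x_{P \ominus Q}) = |v(x_P) - v(x_Q)|$ when $v(x_P) \neq v(x_Q)$, while $v(x_{P \ominus Q}) \leq 0$ when $v(x_P) = v(x_Q)$. Since $f_*(P) \cdot f_*(Q)^{-1}$ is then either $x_P/x_Q$ or $x_Q/x_P$, its valuation equals $\pm(v(x_P) - v(x_Q))$, and the preceding bound places this element in $H^{00}$ (after the modulo $\epsilon^2$ normalization in $H$, which is harmless since the valuation in question is already infinitesimal relative to $v(\epsilon)$).

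The principal obstacle is the cancellation argument in the opposite-sign case. Geometrically, it reflects the antipodal identification on the circle $G/G^{00}$: a point $P$ with $y_P > 0$ and a point $Q$ with $y_Q < 0$ sharing the same $x$-valuation occupy antipodal positions, so $P \ominus Q$ is forced near the $2$-torsion rather than the identity. Making this rigorous requires tracking the expansion of the displayed formula past the leading order on both summands, and the required cancellation is essentially an algebraic identity dictated by the curve equation $y^2 = x(x+1)(x+\epsilon)$.
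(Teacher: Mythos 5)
Your overall decomposition matches the paper: reduce to $P,Q\notin O_\sim\cup(T_2)_\sim$, establish that $y_P,y_Q$ have the same sign, then show $v(x_{P\ominus Q})$ controls $|v(x_P/x_Q)|$ via the addition formula. Your third step is essentially the paper's computation, where one shows
$v(x_{P\ominus Q})\geq v(x_P)+v(x_Q)-2v(x_P-x_Q)$ and deduces $|v(x_Q/x_P)|<\frac{1}{n}v(\epsilon)$ for all $n$ from $P\ominus Q\in G^{00}$; so that part is sound (the paper only needs the one-sided bound, not the equality you assert).

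The gap is in your second step. The paper does not prove $\sign(y_P)=\sign(y_Q)$ by a valuation computation at all; it invokes the circular ordering: $G^{00}$-cosets are convex arcs of $E(K)^0$, and the arcs $G^{00}$ and $(T_2)_\sim$ are exactly the two pieces separating the upper half $\{y>0\}$ from the lower half $\{y<0\}$; a coset disjoint from both therefore lies entirely in one half, so both representatives have the same $y$-sign. Your proposed replacement --- that opposite signs force the leading terms of the addition formula to cancel so that $v(x_{P\ominus Q})\geq v(\epsilon)$, placing $P\ominus Q$ in $(T_2)_\sim$ --- is not correct. Take $Q$ near $\ominus P$ (so $x_Q$ near $x_P$, $y_Q$ near $-y_P$) with $v(x_P)=v(x_Q)=\gamma$; then $P\ominus Q=P\oplus(\ominus Q)$ is near $[2]P$, and by the duplication formula $v(x_{[2]P})\approx 2\gamma$. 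For $\gamma$ strictly between $\frac{1}{m}v(\epsilon)$ and $\frac{1}{2}v(\epsilon)$ one has $2\gamma<v(\epsilon)$, so $P\ominus Q$ lands in the ``middle range'' --- neither in $G^{00}$ nor in $(T_2)_\sim$. The correct conclusion from the sign mismatch is only $P\ominus Q\notin G^{00}$ (because $v(x_{P\ominus Q})$ is bounded below by something of order $2\gamma$, which exceeds $\frac{1}{n}v(\epsilon)$ for suitable $n$), not $v(x_{P\ominus Q})\geq v(\epsilon)$. As written, your cancellation claim is false, and you would need to either weaken it to a lower bound of the type $v(x_{P\ominus Q})\gtrsim v(x_P)+v(x_Q)$ and argue from there, or simply adopt the paper's soft ordering argument, which is both shorter and avoids the delicate leading-order bookkeeping you acknowledge you have not carried out.
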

\begin{proof}

Let $P,Q\in P_\sim$, then $P\ominus Q\in G^{00}$, i.e., $v(x_{P\ominus Q})<\frac{1}{n}v(\epsilon)$, for all $n$.
Our aim is to prove that $f_*(P)\sim f_*(Q)$: i.e., $f_*(P) f_*(Q)^{-1}\in H^{00}$. Notice that we already proved this for the class of ${T_2}$ and for $G^{00}$, we shall then suppose $P,Q\notin (T_2)_\sim$, and $P,Q\notin G^{00}$, so we have, by symmetry of the elliptic curve and the lemmas above, $\sign(y_P)= \sign(y_Q)$ and $v(\epsilon)>v(x_Q),v(x_P)>\frac{1}{m}v(\epsilon)$ for some $m\in \mathbb{N}$.

Suppose then that for all $n$ we have $\frac{1}{n}v(\epsilon)>v\left(x_{P\ominus Q}\right)$. Using the addition formula (\ref{sum}) and the fact that $x_{\ominus Q}=x_Q$ and $y_{\ominus Q}=-y_Q$ we have $v\left(x_{P\ominus Q}\right)=v\left(\frac{(y_P+y_Q)^2}{(x_P-x_Q)^2}-\epsilon-1-x_P-x_Q\right)=v\big(x_P(x_P+1)(x_P+\epsilon)+x_Q(x_Q+1)(x_Q+\epsilon)+2y_Py_Q-\epsilon x_P^2-\epsilon x_Q^2+2\epsilon x_Px_Q-x_P^2-x_Q^2-2x_Px_Q-(x_P+x_Q)(x_P-x_Q)^2)-2v(x_P-x_Q\big)=$\\
$=v(\epsilon x_P+\epsilon x_Q+2x_Px_Q+2\epsilon x_Px_Q+x_P^2x_Q+x_Px_Q^2+2y_Py_Q)-2v(x_P-x_Q)\geq$
\vskip 0.2 cm
$\big($ since $2y_Py_Q=2\sqrt{x_Px_Q(x_P+\epsilon)(x_Q+\epsilon)(x_P+1)(x_Q+1)}<$\\
$<2\sqrt{x_Px_Q(2x_P)(2x_Q)(x_P+x_Q+1)^2}=4x_Px_Q(x_P+x_Q+1)$ $\big)$,
\vskip 0.2 cm
$\geq v(\epsilon(x_P x_Q+2x_Px_Q)+ x_Px_Q(x_P+x_Q+2)+4x_Px_Q(x_P+x_Q+1))-2v(x_P-x_Q)=$\\
$= v(\epsilon(x_P+x_Q+2x_Px_Q)+ x_Px_Q(5x_P+5x_Q+6))-2v(x_P-x_Q)=$
\vskip 0.2 cm
$\big($ since $v(\epsilon(x_P+x_Q+2x_Px_Q))=v(\epsilon)+\min\{v(x_P),v(x_Q)\}>v(x_P)+v(x_Q)=v(x_Px_Q(5x_P+5x_Q+6))$ $\big)$,
\vskip 0.2 cm
$=v(x_P)+v(x_Q)-2v(x_P-x_Q)$.

\vskip 0.2 cm
So $P\ominus Q\in G^{00}$ implies that $v(x_P)+v(x_Q)-2v(x_P-x_Q)\leq \frac{1}{n}v(\epsilon)$, for all $n$.
\vskip 0.2 cm

We recall that, since $sign{P}=sign{Q}$, $f_*(P)\cdot f_*(Q)^{-1}=\frac{x_Q}{x_P}$ or $f_*(P)\cdot f_*(Q)^{-1}=\frac{x_P}{x_Q}$, so $f_*(P)\cdot f_*(Q)^{-1}\in H^{00}$ if and only if $\left|v\left(\frac{x_Q}{x_P}\right)\right|\leq \frac{1}{n}v(\epsilon)$. 

\vskip 0.2 cm

We have two cases to consider:

\begin{itemize}
 \item 
If $x_P\geq x_Q$, then $v(x_P)\leq v(x_Q)$ and clearly $v\left(\frac{x_Q}{x_P}\right)\geq 0$, we just need to show that $ v\left(\frac{x_Q}{x_P}\right)\leq \frac{1}{n}v(\epsilon)$, for all $n$. But then $P\ominus Q\in G_{00}$ implies $v(x_P)+v(x_Q)-2v(x_P-x_Q)\geq v(x_P)+v(x_Q)-2v(x_P)=v\left(\frac{x_Q}{x_P}\right)$, so $v\left(\frac{x_Q}{x_P}\right)\leq \frac{1}{n}v(\epsilon)$, and $f_*(P)\cdot f_*(Q)^{-1}\in H^{00}$.

\item If $x_P< x_Q$, then $v(x_P)\geq v(x_Q)$, $v\left(\frac{x_Q}{x_P}\right)\leq 0$ and $\frac{1}{n}v(\epsilon)\geq v(x_P)+v(x_Q)-2v(x_P-x_Q)\geq v(x_P)+v(x_Q)-2v(x_Q)=v\left(\frac{x_P}{x_Q}\right)$, so $v\left(\frac{x_Q}{x_P}\right)\geq -\frac{1}{n}v(\epsilon)$, and we have proved the theorem.

\end{itemize}

\end{proof}

We can now easily check that $f$ is a bijection:
\begin{cor}\label{bijcor}
The map $f$ is a bijection $G/G^{00}\rightarrow H/H^{00}$.
\end{cor}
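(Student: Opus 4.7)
The overall plan is to verify surjectivity via an explicit section, and to establish injectivity by sharpening the valuation inequality from the proof of Theorem \ref{defbij} to an exact equality.

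For surjectivity, I construct a section $g_*: H \to G$ of $f_*$ by setting $g_*(1) = O$; taking $g_*(h)$ to be the unique point on $E(K)^0$ with $x = 1/h$ and $y > 0$ when $h \in (1, 1/\epsilon)$; taking $g_*(h)$ to be the unique point with $x = h$ and $y < 0$ when $h \in (\epsilon, 1)$; and setting $g_*(\epsilon) = T_2$. A case-by-case check against the four cases defining $f_*$ gives $f_* \circ g_* = \mathrm{id}_H$, so $f$ is surjective.

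For injectivity, suppose $f(P_\sim) = f(Q_\sim)$. I first dispatch the boundary classes. If $P \in O_\sim$, then $f_*(Q) \in H^{00}$, and since $v(f_*(Q))$ is either $0$, $v(x_Q)$, or $-v(x_Q)$, this forces $|v(x_Q)|$ to be infinitesimal relative to $v(\epsilon)$, so Lemma \ref{e00eq} places $Q$ in $G^{00}$; the case $P \in (T_2)_\sim$ is analogous via Lemma \ref{classa}. In the remaining generic case, where $\frac{1}{m} v(\epsilon) \leq v(x_P), v(x_Q) \leq \frac{m-1}{m} v(\epsilon)$ for some $m$, the hypothesis $f_*(P) f_*(Q)^{-1} \in H^{00}$ forces $\sign(y_P) = \sign(y_Q)$, since otherwise $v(f_*(P)/f_*(Q)) = \pm(v(x_P) + v(x_Q))$ is bounded away from the infinitesimals. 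With $y_P y_Q > 0$, the two dominant terms $2 x_P x_Q$ and $2 y_P y_Q$ in the numerator appearing in the proof of Theorem \ref{defbij} do not cancel, and every other summand has strictly larger valuation; the chain of inequalities there therefore collapses to the exact equality
\[ v(x_{P \ominus Q}) = v(x_P) + v(x_Q) - 2 v(x_P - x_Q). \]
This equals $|v(x_P) - v(x_Q)|$ when $v(x_P) \neq v(x_Q)$ and is $\leq 0$ when $v(x_P) = v(x_Q)$. In both subcases, $f_*(P) f_*(Q)^{-1} \in H^{00}$ encodes exactly that $|v(x_P) - v(x_Q)|$ is infinitesimal relative to $v(\epsilon)$, which forces $v(x_{P \ominus Q}) < \frac{1}{n} v(\epsilon)$ for every $n$, i.e., $P \ominus Q \in G^{00}$, so $P_\sim = Q_\sim$.

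The main obstacle is the upgrade from inequality to equality in the valuation of the numerator: it hinges on ruling out cancellation between the two leading terms, which is precisely what $\sign(y_P) = \sign(y_Q)$ provides. Once that sign agreement is extracted from the hypothesis, the remaining valuation bookkeeping mirrors the computation already performed for well-definedness.
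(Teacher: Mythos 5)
Your proof is correct and follows essentially the same approach as the paper: reduce to the generic case, apply the addition formula, and bound $v(x_{P\ominus Q})$ by $|v(x_P)-v(x_Q)|$. You are somewhat more explicit than the paper in extracting $\sign(y_P)=\sign(y_Q)$ from the hypothesis (the paper uses this tacitly in the step $2y_Py_Q>2x_P^2x_Q^2$) and in dispatching the boundary classes, and you organize the valuation bookkeeping around the exact identity $v(x_{P\ominus Q})=v(x_P)+v(x_Q)-2v(x_P-x_Q)$, which the paper records separately in Remark~\ref{remminus} rather than inside this proof.
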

\begin{proof}
Surjectivity: trivial by construction.

\vskip 0.3 cm
Injectivity:  We need to consider only points of $E(K)^0$ not in $(T_2)_{\sim}$, $O_{\sim}$. Suppose $f(P_\sim)=f(Q_\sim)$. We have $\left|v\left(\frac{x_Q}{x_P}\right)\right| <\frac{1}{n}v(\epsilon)$, for all $n$. And by our assumption $0<x_P,x_Q<1$. We need to prove that $P\ominus Q\in O_{\sim}$, i.e., $v(x_{P\ominus  Q})<\frac{1}{n}v(\epsilon)$ for all $n$.

But $v(x_{P\ominus  Q})=$\\$=v(\epsilon x_P+\epsilon x_Q+2x_Px_Q+2\epsilon x_Px_Q+x_P^2x_Q+x_Px_Q^2+2y_Py_Q)-2v(x_P-x_Q)\leq$\\

$\big($  since $2y_Py_Q>2x_P^2x_Q^2$,$\big)$\\

$\leq  v(\epsilon(x_P x_Q+2x_Px_Q)+ x_Px_Q(x_P+x_Q+2)+2x_P^2x_Q^2)-2v(x_P-x_Q)=$\\
$= v(\epsilon(x_P+x_Q+2x_Px_Q)+ x_Px_Q(4x_P+4x_Q+1+x_Px_Q))-2v(x_P-x_Q)=$\\

(since $v(\epsilon(x_P+x_Q+2x_Px_Q))=v(\epsilon)+\min\{v(x_P),v(x_Q)\}>v(x_P)+v(x_Q)=v(x_Px_Q(4x_P+4x_Q+1+x_Px_Q))$,)\\

$=v(x_P)+v(x_Q)-2v(x_P-x_Q)\leq v(x_P)+v(x_Q)-2\min\{v(x_P),v(x_Q)\}$. 

But $v(x_P)+v(x_Q)-2\min\{v(x_P),v(x_Q)\}= \left|v\left(\frac{x_Q}{x_P}\right)\right| <\frac{1}{n}v(\epsilon)$ for all $n$, so also $v(x_{P\ominus  Q})<\frac{1}{n}v(\epsilon)$ for all $n$, and we are done.

\end{proof}

\begin{rmk}\label{remminus}
 From the proof above we deduce that if $P_{\sim}\neq Q_{\sim}$, and $P,Q$ are representatives in $E(K)^0$, then $v(x_{P\ominus Q})=v(x_P)+v(x_Q)-2\min\{v(x_P),v(x_Q)\}$, and a case-by-case study shows that $f:G/G^{00}\rightarrow H/H^{00}$ is an isomorphism. This is rather tedious and we omit the details. 
\end{rmk}

In \cite{Pen1} it is proved that the structure $(K,H^{00},\dots)^{eq}$ is interdefinable with a nonstandard real closed field $K_w^{eq}$, whose valuation is $w$ and that $H/H^{00}$ is a definable (in $K_w^{eq}$) group with underlying set in $\Gamma_w$.
Having found a definable bijection between $G/G^{00}$ and $H/H^{00}$, by Lemma \ref{coroneb},  we get the following theorem:

\begin{lem}\label{splmul}
 Given an elliptic curve $E$ with split multiplicative reduction, the group $G/G^{00}$ is $1$-based in the structure $K'=(K,G^{00},\dots)^{eq}$ and is in definable bijection with a group whose underlying set is in the value group $\Gamma_w$ of the real closed valued field interdefinable with $K'$.
\end{lem}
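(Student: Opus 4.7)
The plan is to leverage the machinery already assembled in this subsection. Theorem \ref{defbij} and Corollary \ref{bijcor} provide a definable bijection $f:G/G^{00}\to H/H^{00}$ where $H=([\epsilon,1/\epsilon),*\mod\epsilon^2)$, and the theorem quoted from \cite{Pen1} ensures that $H/H^{00}$ is $1$-based in $(K,H^{00},\dots)^{eq}$ and is definably isomorphic there to a group with underlying set in $\Gamma_w^n$. Given these two ingredients, the desired conclusion should follow by transporting $1$-basedness along $f$ via Lemma \ref{coroneb}, provided that $K'=(K,G^{00},\dots)^{eq}$ and $(K,H^{00},\dots)^{eq}$ are verified to be interdefinable.

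First I would establish the interdefinability claim. From Lemma \ref{e00eq}, $G^{00}$ is type-defined by $\{v(x_P)<\tfrac{1}{n}v(\epsilon):n\in\omega\}$, so the $x$-projection of $G^{00}$ determines the valuational cut at the convex hull of $\{\tfrac{1}{n}v(\epsilon):n\in\omega\}$ in $K$. The group $H^{00}=\bigcap_n\{x:|v(x)|<\tfrac{1}{n}v(\epsilon)\}$ is cut out by the same cofinal family. By the \cite{MMS} result invoked earlier in the paper, this valuational cut determines a unique definable convex valuation $w$ on $K$, so both $K'$ and $(K,H^{00},\dots)^{eq}$ are interdefinable with the same $K_w^{eq}$, and hence with each other.

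Second, with the ambient structures identified, both the bijection $f:G/G^{00}\to H/H^{00}$ and the definable bijection $g:H/H^{00}\to \Gamma_w^n$ coming from \cite{Pen1} live inside $K_w^{eq}$. The composite $g\circ f$ displays $G/G^{00}$ in definable bijection with a subset of $\Gamma_w^n$, giving the second conclusion of the statement. For $1$-basedness, I would invoke Lemma \ref{coroneb}: both $G/G^{00}$ and $H/H^{00}$ are definably circularly ordered (the former by $\lhd$, the latter by the order inherited from the truncation), so the definable bijection $f$ transfers $1$-basedness from $H/H^{00}$ to $G/G^{00}$ inside $K'$.

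The main obstacle I anticipate is the interdefinability step: pinning down the valuational cut induced by $G^{00}$ and matching it with that induced by $H^{00}$ requires care, since the $x$-projection of $G^{00}$ is not literally the valuation ideal but a symmetric neighbourhood of $0$ determined by the family $\{\tfrac{1}{n}v(\epsilon)\}$. Once this identification is made precise via the \cite{MMS} theorem on valuational cuts, the remainder is essentially bookkeeping: composing definable bijections and applying the already-established transfer lemma for $1$-basedness.
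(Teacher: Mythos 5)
Your proposal is correct and follows essentially the same route as the paper: the definable bijection $f$ from Theorem \ref{defbij} and Corollary \ref{bijcor}, the quoted result from \cite{Pen1} on the big multiplicative truncation $H$, and transfer of $1$-basedness via Lemma \ref{coroneb}, with the interdefinability of $K'$ and $(K,H^{00},\dots)^{eq}$ grounded in the fact that $G^{00}$ and $H^{00}$ are cut out by the same family of conditions $v(\cdot)<\frac{1}{n}v(\epsilon)$ (which the paper likewise treats somewhat implicitly). Your flagging of the interdefinability step as the delicate point is accurate and matches the paper's own acknowledgement that this is what remains to be checked beyond constructing $f$.
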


Lemma \ref{goodnonsp} and Lemma \ref{splmul}  prove part of Theorem \ref{thmpaper}. In the next section is proved the remaining part, with the analysis of the truncations.
\vskip 0.3 cm

\section{Truncations of elliptic curves}

Given an elliptic curve $E$ defined over a saturated real closed field $K$, a \emph{truncation} of $E(K)^0$is a group $G$ $\left([\ominus S,S),\oplus\mod[2]S\right)$, where $S\in E(K)^0\setminus T_2$, $y_S>0$, and the interval is considered according to the orientation $\lhd$ of $E(K)^0\setminus \{T_2\}$. We denote by $\oplus^*$ the operation on $G$. 

We now extend the classification above to such $G$ proving the following theorem: 
\begin{thm}\label{trunth}
The truncation $G=\left([\ominus S,S),\oplus\mod [2]S\right)$ of the $K$-points of an elliptic curve $E$ is $1$-based in $K'=(K,G^{00},\dots)^{eq}$ if and only if $G/G^{00}$ is in definable bijection with a group whose underlying set is in the value group of $K'=K_w^{eq}$, and if and only if $E$ has split multiplicative reduction and $v(x_S)> 0$.
\end{thm}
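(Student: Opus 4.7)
The strategy mimics Section 3: (i) compute $G^{00}$ via a bounding sequence of torsion points of the truncation; (ii) extract from $G^{00}$ the valuational cut that makes $K'$ interdefinable with a real closed valued field $K_w^{eq}$; (iii) exhibit a definable bijection between $G/G^{00}$ and the quotient $H/H^{00}$ of a suitable truncation $H$ of $(K^{\times}, \cdot)$, whose $1$-basedness is classified in Theorem 4.10 of \cite{Pen1}; and (iv) invoke Lemma \ref{coroneb} to transfer the $1$-basedness classification to $G/G^{00}$. The equivalence of the first two conditions of the theorem follows as in the discussion preceding Theorem \ref{thmpaper}, so the heart of the proof is the equivalence with the condition ``$E$ has split multiplicative reduction and $v(x_S) > 0$''.

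For step (i), take $T^*_2 = \ominus S$ (the $2$-torsion of $G$), and for $n \geq 2$ let $T^*_{2^n} \in E(K)^0$ be the point on the arc from $O$ to $S$ with $[2] T^*_{2^n} = T^*_{2^{n-1}}$ in $E(K)^0$. Applying the duplication formula (\ref{duplicrcurfor}) recursively, exactly as in Lemma \ref{e00eq}, yields
\[
G^{00} = \bigcap_{n \in \omega} \{ P \in G : v(x_P) < \delta/n \},
\]
with $\delta = \min\{v(x_S), v(\epsilon)\}$ (two regimes, according to whether $v(x_S) \leq v(\epsilon)$ or not). The projection of $G^{00}$ onto the $x$-axis is then a valuational cut, witnessed by any element of $K$ of positive $w$-valuation, giving step (ii) and the identification $K' \simeq K_w^{eq}$.

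Step (iii) splits into three sub-cases based on $(v(\epsilon), v(x_S))$. When $v(\epsilon) = 0$ (good or non-split multiplicative reduction), $w$ is the standard valuation and the reduction map $P \mapsto \tilde{P}$ of Section 2.1 induces a definable isomorphism between $G/G^{00}$ and the corresponding truncation of $\tilde{E}^0(\mathbb{R})$, a group with underlying set in $k_w$; as in Lemma \ref{goodnonsp}, this is non-$1$-based. When $v(\epsilon) > 0$ and $v(x_S) > 0$, the map $f_*$ of Theorem \ref{defbij} restricts to a definable bijection between $G$ and a ``big'' multiplicative truncation $H = ([b^{-1}, b), \cdot \mod b^2)$ with $v(b) < 0$ ($b = x_S^{-1}$ if $v(x_S) \leq v(\epsilon)$, and $b = \epsilon^{-1}$ otherwise); the proofs of Theorem \ref{defbij} and Corollary \ref{bijcor} adapt verbatim to show that the induced $f : G/G^{00} \to H/H^{00}$ is a well-defined bijection, and Theorem 4.10 of \cite{Pen1} then gives $1$-basedness of $G/G^{00}$ and its realization in $\Gamma_w$. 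When $v(\epsilon) > 0$ and $v(x_S) \leq 0$, a parallel construction identifies $G/G^{00}$ with either a ``small'' multiplicative truncation (if $v(x_S) = 0$) or an additive-type truncation near $O$ (if $v(x_S) < 0$, where one switches to the chart at infinity $(u, v) = (x/y, 1/y)$ in which the elliptic group law linearises); both are non-$1$-based in $k_w$ by the other case of Theorem 4.10 of \cite{Pen1}.

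\textbf{Main obstacle.} The delicate part is the $v(x_S) < 0$ sub-case of step (iii): then $S$ lies inside the monad $E(K)^{00}$ of the full curve, the affine bijection $f_*$ collapses most of $G$ onto $1_\sim$, and one must work instead in a chart at infinity where the group law linearises. The resulting valuation arithmetic --- checking that the alternate bijection descends to $G/G^{00}$, and that the valuation $w$ recovered in $K_w^{eq}$ agrees with the one induced by $G^{00}$ --- is analogous to but more involved than the computations of Theorem \ref{defbij} and Corollary \ref{bijcor}, with particular care required at the boundary classes $O_\sim$ and $(\ominus S)_\sim$.
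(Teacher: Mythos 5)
Your overall strategy is the same as the paper's: compute $G^{00}$ from a bounding sequence of torsion points, identify $K'$ with a real closed valued field, exhibit a definable bijection between $G/G^{00}$ and a quotient $H/H^{00}$ (or a group in a residue field), and transfer $1$-basedness via Lemma~\ref{coroneb}. Your unified formula $G^{00}=\bigcap_{n}\{P\in G: v(x_P)<\delta/n\}$ with $\delta=\min\{v(x_S),v(\epsilon)\}$ is correct and actually states more transparently what the paper establishes piecemeal across its four cases (and corrects an apparent $\epsilon$-vs-$x_S$ typo in the paper's Case~3). You also miss an economy in the paper: when $S\notin E(K)^{00}$ the paper simply observes $G^{00}=E(K)^{00}$ and that $G/G^{00}$ is a definable truncation of $E(K)^0/E(K)^{00}$, so all conclusions are inherited at once, with no new bijection needed.

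There is, however, a genuine gap in your step (iii), concentrated at $v(x_S)<0$. Your Case~A claims that for $v(\epsilon)=0$ the reduction map $P\mapsto\tilde P$ induces a definable isomorphism between $G/G^{00}$ and a truncation of $\tilde E^0(\mathbb R)$. This is false when $v(x_S)<0$: every $P\in G$ then satisfies $v(x_P)\leq v(x_S)<0$, so the entire group $G$ is collapsed to $\tilde O$, and the induced map on $G/G^{00}$ (which by Pillay's conjecture is a $1$-dimensional compact Lie group, hence nontrivial) is certainly not an isomorphism onto any truncation. Your ``Main obstacle'' paragraph does flag $v(x_S)<0$ as delicate and proposes switching to the chart at infinity $(u,v)=(x/y,1/y)$ where the group law linearises to first order. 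That is a plausible route but a genuinely different one from the paper, and you do not carry it out; one would have to check that the higher-order corrections to the linearised law are absorbed by $G^{00}$, which is the crux of the matter. The paper avoids the chart change entirely: in its Case~2 it keeps the affine $x$-coordinate, shows $G^{00}=\{P\in G: v(x_P)<v(x_S)\}$, and runs the same valuation estimates as in Theorem~\ref{defbij} to exhibit $G/G^{00}$ as a definable group inside $B_{\geq v(x_S)}(0)/B_{>v(x_S)}(0)\cong k_v$, which yields non-$1$-basedness immediately via Remark~\ref{skelk} and Lemma~\ref{coroneb}. To complete your argument you would need either to reorganize so that the $v(x_S)<0$ sub-case is not asserted to follow from the reduction map, and then actually perform the chart-at-infinity computation, or to adopt the paper's direct ball-quotient argument.
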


\begin{proof}

We shall consider all the possible cases, and therefore obtain all the implications in the theorem by exhaustion.
\vskip 0.2 cm

\begin{enumerate}
 \item
The first case is that of a truncation $G$ by a point $S\in E(K)^0\setminus E(K)^{00}$, then $G/G^{00}$ is simply a truncation of $ E(K)^0 / E(K)^{00}$ and thus $G/G^{00}$ has the same properties of $ E(K)^0 / E(K)^{00}$.

To see this, let $G=([\ominus S,S),\oplus \mod [2]S)$ and $S\notin E(K)^{00}$. This implies that $T^E_n\lhd P\lhd T^E_{n+1}$ for some $n$ and a bounding sequence $(T^E_n)_{n\in \mathbb{N}}$ of $E(K)^0$. For any $k$ let $T_k$ be a torsion point of a bounding sequence of $G$, defined as in Definition \ref{boundtors}, then it is easy to see that $x_{T^E_{kn}}<x_{T_k}<x_{T^E_{k(n+1)}}$, and therefore $G^{00}=E(K)^{00}$. Moreover $G/G^{00}$ is a definable truncation of $E(K)^{0}/E(K)^{00}$ in the expansion $K'$ of $K$ by a predicate for $G^{00}$, and so, by Corollary \ref{bijcor}, if $E$ has good or nonsplit multiplicative reduction, then $G/G^{00}$ is non-$1$-based in $K'$ and in definable bijection with a group with underlying set in the residue field of $K'$; if $E$ has split multiplicative reduction, $G/G^{00}$ is $1$-based and in definable bijection with a group with underlying set in the value group of $K'$.

\item This is the case of a truncation by a point $S$ such that $v(x_S)< 0$. 
 
Thus for $P\in G$, $v(x_P)<0$. Hence $v\left(x_{[2]P}\right)=v\left(\frac{(x_P^2-\epsilon)^2}{4x_P(x_P+1)(x_P+\epsilon)}\right)=2v(x_P^2-\epsilon)-3v(x_P)=v(x_P)$, and so $G^{00}=\left\{P\in G|v(x_P)<v(x_S)\right\}$.

It will suffice to prove that for $P,Q\notin G^{00}$ (and thus $v(x_Q)=v(x_P)=v(x_S)$), $P\ominus^* Q\in G^{00}$ (i.e. $v(x_{P\ominus^* Q})<v(x_S)$) if and only if $v(x_P-x_Q)>v(x_S)$ and $y_S,y_Q$ have the same sign. In fact this would imply that $G/G^{00}$ is in definable bijection with a definable group in the quotient $B_{\geq v(x_S)}(0)/B_{> v(x_S)}(0)$. We saw in Remark \ref{skelk} that there is a definable (in $K'$) field bijection $B_{\geq v(x_S)}(0)/B_{> v(x_S)}(0)\cong k_v\cong \mathbb{R}$, therefore $G/G^{00}$ is in definable bijection with a group with underlying set in the residue field of a real closed valued field and so it is non-$1$-based in $K'$ by Lemma \ref{coroneb}.

\vskip 0.2 cm

Suppose firstly that   $v(x_{P\ominus^* Q})<v(x_S)$.

 Using the computation in Theorem \ref{defbij}, $v(x_{P\ominus^* Q})\geq v(\epsilon(x_P+x_Q+2x_Px_Q)+x_Px_Q(5x_P+5x_Q+6))-2v(x_P-x_Q)=$

(since $v(x_Q),v(x_P)= v(x_S)<0\leq v(\epsilon)$), 

$=v(x_P)+v(x_Q)+\min\{v(x_P),v(x_Q)\}-2v(x_P-x_Q)$. 

So $2v(x_P-x_Q)> 2v(x_S)$, so $v(x_P-x_Q)>v(x_S)$),
\vskip 0.3 cm

Now suppose $v(x_P-x_Q)>v(x_S)$.   Then 
$v\left(x_{P\ominus^* Q}\right)=$\\
$=v(\epsilon x_P+\epsilon x_Q+2x_Px_Q+2\epsilon x_Px_Q+x_P^2x_Q+x_Px_Q^2+2y_Py_Q)-2v(x_P-x_Q)\leq$

(since $2y_Py_Q>2x_Px_Q$),

$\leq v(\epsilon(x_P x_Q+2x_Px_Q)+ x_Px_Q(x_P+x_Q+2)+x_Px_Q(4x_P+4x_Q+6))=$\\
$= v(\epsilon(x_P+x_Q+2x_Px_Q)+ x_Px_Q(5x_P+5x_Q+6))-2v(x_P-x_Q)=$

$=v(x_P)+v(x_Q)+\min\{v(x_P),v(x_Q)\}-2v(x_P-x_Q)\leq v(x_P)+v(x_Q)+\min\{v(x_P),v(x_Q)\}-2v(x_S)=3v(x_S)-2v(x_S)=v(x_S)$.

With this we proved Case 2.

\end{enumerate}

The above are the only possible cases when $E$ has good or nonsplit multiplicative reduction. We have two more cases when $E$ has split multiplicative reduction. So from now on we assume $v(\epsilon)>0$.

\begin{enumerate}  
 \item[3.] $S\in E(K)^{00}$ and $v(x_S)> 0$. With such assumptions any point $P\in G$ has valuation $v(x_P)<v(\epsilon)$. Then $v\left(x_{[2]P}\right)=v\left(\frac{(x_P^2-\epsilon)^2}{4x_P(x_P+1)(x_P+\epsilon)}\right)=2v(x_P^2+\epsilon)-v(x_P)-0-v(x_P)=2v(x_P)$. Thus $G^{00}=\{P\in G| v(x_P)<\frac{1}{n}v(\epsilon)\}$.

As in the split multiplicative case we can define in the suitable expansion a bijection $G/G^{00}\rightarrow H/H^{00}$ with $H=\left(\left[x_S,\frac{1}{x_S}\right),*\mod \left(\frac{1}{x_S}\right)^2\right)$ a ``big'' multiplicative truncation.

The map $f_*: G\rightarrow H$ defined by \[ f_*(P)=\left\{\begin{array}{cl}
               1& ~if ~ x_P\geq 1\\
\\
               \left(\frac{1}{x_P}\right)& ~if~ y_P\geq0\wedge x_P<1, \\
\\
 x_P& ~if~ y_P<0\wedge x_P<1,\\

              \end{array}\right.\]

induces a map $f:G/G^{00}\rightarrow H/H^{00}$. The same calculation that led to Corollary \ref{bijcor} gives us that $f$ is a definable bijection. Therefore $G/G^{00}$ inherits $1$-basedness from $H/H^{00}$ by Lemma \ref{coroneb} and again it is in definable bijection with a group with underlying set in the value group of a real closed valued field.

\item[4.] $S\in E(K)^{00}$ and $v(x_S)= 0$. It is again immediate to observe that if $x_P\in G$ and $v(x_P)=0$, $v\left(x_{[2]P}\right)=2v(x_P)$. Therefore $G^{00}=\{P\in G|v(x_P)<0\}$. By the same argument as Case 3 we obtain a definable bijection with a multiplicative truncation, though this time it is a ``small'' one, and therefore $G/G^{00}$ is in definable bijection with a group with underlying set in the residue field of a real closed valued field and, again by Lemma \ref{coroneb}, non-$1$-based in $K'$.
\end{enumerate}

The inspection of the cases considered gives us the proof of Theorem \ref{trunth}.

\end{proof}

With this last case study we have completed the proof of Theorem \ref{trunth} and therefore of Theorem \ref{thmpaper}.

\vskip 1 cm

It is a natural question now to ask to what extent the notion of ``intrinsic'' reduction can help in obtaining a reduction theory for abelian varieties over fields with a continuous valuation. In particular we wonder whether we can obtain a similar classification of higher dimensional abelian varieties.

\vskip 4 cm

The author would like to thank Prof. Anand Pillay for his guidance and support, Dr. Marcus Tressl for many interesting discussions and the anonymous referee for the many good suggestions.

\vskip 4 cm

\vskip 2 cm
   Davide Penazzi\\
   School of Mathematics, University of Leeds, Woodhouse Lane, LS2 9JT,\\
   UK
\end{document}